\documentclass[11pt]{amsart}

\usepackage{amsmath}
\usepackage{amssymb}

\newtheorem{thm}{Theorem}[section]
\newtheorem{cor}[thm]{Corollary}
\newtheorem{lem}[thm]{Lemma}
\newtheorem{propo}[thm]{Proposition}
\newtheorem*{A}{Theorem A}
\newtheorem*{B}{Theorem B}
\newtheorem*{Gonchar}{Gonchar's Lemma}

\theoremstyle{definition}

\theoremstyle{remark}
\newtheorem{rem}{Remark}

\newcommand{\capp}{\mbox{\rm cap}\,}
\newcommand{\dst}{\displaystyle}
\hyphenation{E-du-ca-ci-on}

\begin{document}

%%%%%%%%%%%%%%%%%%%%%%%%%%%%%%%%%%%%%%%%%%%%%%%%%%%%%%%%%%%%%%%%%%%%%%%%%%%%%%%%%%%%%%%%%%%%%%%%%%%%%%%%%%%%%%%%%%%%%%%%%%%%%%%%%%%%%%%%%%%%%%%%%%%%%%%%%%%%%%%%%%%%%%%%%%%%%%%%%%%%%%%%%%%%%%%%%%%%%%%%%%%%%%%%%%%%%%%%%%       TITULO      %%%%%%%%%%%%%%%%%%%%%%%%%%%%%%%%%%%%%%%%%%%%%%%%%%%
%%%%%%%%%%%%%%%%%%%%%%%%%%%%%%%%%%%%%%%%%%%%%%%%%%%%%%%%%%%%%%%%%%%%%%%%%%%%%%%%%%%%%%%%%%%%%%%%%%%%%%%%%%%%%%%%%%%%%%%%%%%%%%%%%%%%%%%%%%%%%%%%%%%%%%%%%%%%%%%%%%%%%%%%%%%%%%%%%%%%%%%%%%%%%%%%%%%%%%%%%%%%%%%%%%%%%%%%%%

\title[Meromorphic continuation of functions]
 {Meromorphic
 continuation of functions and arbitrary distribution of interpolation points}

\author[M. Bello Hern\'andez]{M. Bello Hern\'andez}
\address{Departamento de Matem\'aticas y Computaci\'on, Universidad de La Rioja,
c/ Luis de Ulloa s/n, 26004 Logro\~{n}o, La Rioja, Spain.}
\email{mbello@unirioja.es}

\author[B. de la Calle Ysern]{B. de la Calle Ysern}
\address{Departamento de Matem\'atica Aplicada, E.T.S. Ingenieros
Industriales, Universidad Polit\'ecnica de Madrid, Jos\'e G.
Abascal 2, 28006 Madrid, Spain.}
\email{bcalle@etsii.upm.es}
\thanks{This work was supported by
Direcci\'on General de Investigaci\'on, Ministerio de Educaci\'on y
Ciencia, under grant MTM2009-14668-C02-02. The second author also
received support from Universidad Polit\'ecnica de Madrid through
Research Group ``Constructive Approximation Theory and
Applications".}

%\date{\today}

\subjclass[2010]{Primary: 41A21; secondary: 41A25, 41A27.}
% 41A20- approximation by rational functions
% 41A21- Padé approximation
% 41A25- Rate of convergence, degree of approximation
% 41A27- Inverse theorems
\keywords{Meromorphic continuation, overconvergence of rational
approximants,  row sequences of multipoint Pad\'e approximants,
arbitrary tables of interpolation.}
\begin{abstract}
We characterize the region of meromorphic continuation of an
analytic function $f$ in terms of the geometric rate of convergence
on a compact set of sequences of multi-point rational interpolants
of $f$. The rational approximants have a bounded number of poles and
the distribution of interpolation points is arbitrary.
\end{abstract}

\maketitle
%%%%%%%%%%%%%%%%%%%%%%%%%%%%%%%%%%%%%%%%%%%%%%%%%%%%%%%%%%%%%%%%%%%%%%%%%%%%%%%%%%%%%%%
%%%%%%%%%%%%%%%%%%%%%%%%%%%%%%%%%%%%%%%%%%%%%%%%%%%%%%%%%%%%%%%%%%%%%%%%%%%%%%%%%%%%%%%
%%%%%%%%%%%%%%%%%%%%%%%%%%%%%%%%%%%%%%%%%%%%%%%%%%   INTRODUCCION  %%%%%%%%%%%%%%%%%%%%%%
%%%%%%%%%%%%%%%%%%%%%%%%%%%%%%%%%%%%%%%%%%%%%%%%%%%%%%%%%%%%%%%%%%%%%%%%%%%%%%%%%%%%%%%%
%%%%%%%%%%%%%%%%%%%%%%%%%%%%%%%%%%%%%%%%%%%%%%%%%%%%%%%%%%%%%%%%%%%%%%%%%%%%%%%%%%%%%%%%

\section{Introduction}
\label{intro}

\subsection{Rational functions of best approximation}
It is well known that a sequence of polynomial or rational
approximants converging on a certain region at geometric rate often
converges in a larger region, giving thus additional information on
the analytic or meromorphic continuation of the limit function.
Walsh called that phenomenon overconvergence of the approximants
(see, for instance, \cite{walsh1,walsh2}). Notice that the same word
is used in a somewhat different setting to describe a property of
power series (see \cite{remmert}, Chapter 11). One of the most
beautiful theorems of this type was proved by Gonchar culminating a
series of results given by Bernstein, Walsh, and Saff, among others.

Let $K$ be a compact set of the complex domain $\mathbb{C}$ and $G$
the unbounded component of $\overline{\mathbb{C}}\setminus K$. We
say that $K$ is regular if the domain $G$ is regular with respect to
the Dirichlet problem. By $\capp(K)$ we mean the logarithmic
capacity of $K$. If $\capp(K)>0$, there exists the generalized Green
function of $G$ with pole at $z=\infty$ which we denote by
$g_G(z,\infty)$ (see \cite{totik}, Section II.4). The fact that $K$
is regular implies $\capp(K)>0$ .

Let $\mathcal{P}_n$ be the set of all complex polynomials of degree
at most n. Set
$$
\mathcal{R}_{n,m}=\left\{r\,:\,r=p/q,\,\, \,
p\in\mathcal{P}_{n},\,\, \, q\in\mathcal{P}_m,\,\, \, q\not\equiv
0\, \right\}
$$
and
$$
 d_{n,m}(K)=\min\, \{\|f-r\|_K\,:\, r\in\mathcal{R}_{n,m}\},
$$
where $\|\cdot\|_{K}$ stands for the sup norm on $K$.

\begin{A}
Let $f$ be a function defined on a regular compact set
$K\subset\mathbb{C}$. Then,
\begin{equation}\label{rate}
\limsup_{n\to\infty}\sqrt[n]{d_{n,m}(K)}\le 1/\theta<1
\end{equation}
if and only if the function $f$ admits meromorphic continuation with
at most $m$ poles on the set $E(\theta)=\{z\in\mathbb{C}\,:\,\exp\{
g_G(z,\infty)\}<\theta\}.$
\end{A}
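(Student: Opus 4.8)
I would prove the two implications separately. \textbf{Sufficiency.} Suppose $f$ extends to a function $F$ meromorphic on $E(\theta)$ with at most $m$ poles there, none of them on $K$. Fix $\tau$ with $1<\tau<\theta$ and let $q$ be the monic polynomial whose zeros are exactly the (finitely many) poles of $F$ in $\overline{E(\tau)}$, counted with multiplicity, so $\deg q\le m$ and $h:=qF$ is holomorphic on a neighbourhood of $\overline{E(\tau)}$. By the Bernstein--Walsh theorem there are $p_n\in\mathcal{P}_n$ with $\limsup_n\|h-p_n\|_K^{1/n}\le 1/\tau$; then $p_n/q\in\mathcal{R}_{n,m}$, $q$ does not vanish on $K$, and
\[
d_{n,m}(K)\le\|f-p_n/q\|_K=\bigl\|(h-p_n)/q\bigr\|_K\le\frac{\|h-p_n\|_K}{\min_K|q|},
\]
so that $\limsup_n\sqrt[n]{d_{n,m}(K)}\le 1/\tau$; letting $\tau\uparrow\theta$ gives \eqref{rate}.

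\textbf{Necessity.} Assume \eqref{rate}. I would choose near-best approximants $r_n=p_n/q_n\in\mathcal{R}_{n,m}$ with $\|f-r_n\|_K\le 2d_{n,m}(K)$, $\gcd(p_n,q_n)=1$, $\|q_n\|_K=1$, and fix $\tau$ with $1<\tau<\theta$ and $\lambda$ with $1/\theta<\lambda<1/\tau$. For large $n$, $\|f-r_n\|_K\le\lambda^n$, hence $\|r_{n+1}-r_n\|_K\le 2\lambda^n$. Writing $r_{n+1}-r_n=P_n/(q_nq_{n+1})$ with $\deg P_n\le n+m+1$ and $\|q_nq_{n+1}\|_K\le 1$, one gets $\|P_n\|_K\le 2\lambda^n$ and, by the Bernstein--Walsh inequality, $|P_n(z)|\le 2\lambda^n\tau^{\,n+m+1}$ on $\overline{E(\tau)}$. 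Removing from $\overline{E(\tau)}$ the union $A_n$ of the at most $2m$ disks of radius $\delta_n:=(\lambda\tau)^{n/(4m)}$ about the zeros of $q_nq_{n+1}$ yields $|q_n(z)q_{n+1}(z)|\ge c_\tau\,\delta_n^{\,2m}$ there, whence, since $\lambda\tau<1$,
\[
\sup_{\overline{E(\tau)}\setminus A_n}|r_{n+1}-r_n|\le C_\tau\,(\lambda\tau)^{n/2},
\]
which tends to $0$, while $\capp(A_n)\to 0$ geometrically. (This is the elementary form of Gonchar's Lemma: a rational function with a bounded number of poles that is small on $K$ is small throughout $\overline{E(\tau)}$ away from a set of small capacity.)

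From here I would invoke the standard convergence-in-capacity machinery. Since $\sum_n\capp(A_n)<\infty$ and $\sum_n C_\tau(\lambda\tau)^{n/2}<\infty$, the telescoping series $r_{n_0}+\sum_{n\ge n_0}(r_{n+1}-r_n)$ shows that $r_n$ converges in capacity on $E(\tau)$ to a function $F_\tau$; over $K$ there is no exceptional set (the bound $\|r_{n+1}-r_n\|_K\le 2\lambda^n$ is genuine), so $F_\tau=f$ on $K$. Because each $r_n$ has at most $m$ poles and the argument principle is stable under convergence in capacity, $F_\tau$ is meromorphic on $E(\tau)$ with at most $m$ poles. Finally, as $\tau<\theta$ was arbitrary and the $F_\tau$ agree on overlaps by the identity theorem, they glue to a meromorphic continuation of $f$ on $E(\theta)=\bigcup_{1<\tau<\theta}E(\tau)$; it has at most $m$ poles in each $E(\tau)$, hence at most $m$ poles in $E(\theta)$.

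\textbf{Main obstacle.} I expect the crux to be the capacity-theoretic step: showing that the exceptional sets $A_n$ (equivalently, the possibly erratic ``spurious'' poles of the $r_n$) combine into something negligible, that they cannot accumulate on $K$, and that the pole count is not inflated in the limit --- in short, justifying convergence in capacity and the stability of pole counting under it. The remaining ingredients (Bernstein--Walsh, the identity theorem, the degree bookkeeping) are routine.
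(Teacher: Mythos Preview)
Theorem~A is not proved in this paper; it is quoted as background and attributed to Gonchar \cite{gonsaff} (for the hard direction) and Walsh \cite{walsh2} (for the easy one). Corollary~\ref{teoremaa} is stated as \emph{equivalent} to one implication of Theorem~A, not as a derivation of it. So there is no in-paper proof to compare against directly, but your outline can be measured against the classical argument and against the tools the paper does assemble.

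Your sufficiency direction is exactly Walsh's argument and is correct.

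Your necessity direction has the right architecture---Bernstein--Walsh on the numerator $P_n$, a lower bound on $q_nq_{n+1}$ away from small exceptional disks, telescoping, and then a ``meromorphic limit with bounded pole count'' principle---but two points need tightening. First, logarithmic capacity is not countably subadditive, so ``$\sum_n\capp(A_n)<\infty$'' does not by itself force $\capp\bigl(\bigcup_{n\ge N}A_n\bigr)\to 0$. The clean fix, and the one this paper adopts throughout, is to work with the $1$-dimensional Hausdorff content $\sigma$ instead: your $A_n$ are unions of at most $2m$ disks of radius $\delta_n$, so $\sigma\bigl(\bigcup_{n\ge N}A_n\bigr)\le 2m\sum_{n\ge N}\delta_n\to 0$, and the paper's notion of $\sigma$-convergence applies. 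Second, ``the argument principle is stable under convergence in capacity'' is not a routine remark; it is precisely part~(i) of Gonchar's Lemma (quoted in Section~\ref{definitions}), which is the genuine content of the necessity proof and should be invoked explicitly rather than asserted. With these two adjustments your argument coincides with the standard one. Your lower bound $|q_nq_{n+1}|\ge c_\tau\delta_n^{2m}$ under the normalisation $\|q_n\|_K=1$ does hold (split according to whether each zero lies in a fixed large disk containing $\overline{E(\tau)}$ or not), though the paper's normalisation \eqref{normado} gives the cleaner estimate \eqref{desig} without a case analysis.
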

In particular, the best rational approximation characterizes the
largest set $E(\theta)$ on which the function $f$ admits meromorphic
continuation with at most $m$ poles. That is, if $\theta_{f,m}$
denotes the supremum of the numbers $\theta$ such that the function
$f$ admits meromorphic continuation with at most $m$ poles on
$E(\theta)$, then
$$
\theta_{f,m}=1/\dst\limsup_{n\to\infty}\sqrt[n]{d_{n,m}(K)}.
$$

Gonchar proved Theorem A in \cite{gonsaff}. The essential
contribution by Gonchar was
 to prove that \eqref{rate} implies the assertion about the meromorphic
continuation of $f$. The reciprocal statement basically follows from
work by Walsh \cite{walsh2}. The first result in the direction of
Theorem A was given by Bernstein \cite{bernstein} in 1912 for $m=0$
and $K=[-1,1]$. The general case for $m=0$ was proved by Walsh
\cite{walsh}. Under the assumption that $K$ is bounded by a Jordan
curve, Saff proved Theorem A in \cite{saff}.

Let $f$ be an analytic function on a neighborhood of a compact set
$\Sigma$ wherein interpolation is carried out along an arbitrary
table and let $K$ be a regular compact set on which $f$ is defined.
Our main goal in this work is to show that we can characterize the
largest region (of a given type) of meromorphic continuation of $f$
by means of the geometric rate of convergence on $K$ of its
multi-point rational interpolants. Thus, we considerably enlarge
both the class of situations and the regions in which we can deduce
meromorphic continuation of $f$ and show that the characterization
of the largest region of meromorphic continuation does not depend on
whether or not the interpolation table is extremal.

\subsection{Row sequences of Pad\'e approximants}
To fix ideas, let us consider the simplest case of rational
interpolants corresponding to classical Pad\'e approximation. Let
$f$ be a function analytic on a neighborhood of $z=0$ and let $n$
and $m$ be nonnegative integers. The Pad\'e approximant of type
$(n,m)$ for $f$ is defined as the unique rational function
$\pi_{n,m}=p_{n,m}/q_{n,m}$ verifying
\begin{itemize}
\item[-] $\deg p_{n,m}\le n,\,  \deg q_{n,m}\le m$, and $q_{n,m}\not\equiv 0.$

\item[-] $q_{n,m}(z) f(z)-p_{n,m}(z)= A\,z^{n+m+1}+\cdots,$
\end{itemize}
where rational functions are identified if they coincide after
cancellation of common factors from numerator and denominator.  The
problem of finding $\pi_{n,m}$ is reduced to that of solving a
system of linear equations whose coefficients can be expressed in
terms of the Taylor coefficients of $f$.

The table $\{\pi_{n,m}\},\,\, n, m\in\mathbb{Z}_+$, is called the
Pad\'e table of $f$. For each $m\in\mathbb{Z}_+$ we say that $D_m$
is the disk of $m$-meromorphy of $f$ if $D_m$ is the largest open
disk with center at zero into which $f$ can be continued as a
meromorphic function that has at most $m$ poles, counting
multiplicities. The radius of $D_m$ is denoted by $R_m$. The
following result is implicitly contained in \cite{gonrows}.

\begin{B}
Suppose that there exists $z\not=0$ such that
\begin{equation}\label{monte}
\limsup_{n\to\infty}|f(z)-\pi_{n,m}(z)|^{1/n}\le\frac{|z|}{R}<1.
\end{equation}
Then, $R_m\ge R$, that is, $f$ admits meromorphic continuation with
at most $m$ poles on the set $\{z\in\mathbb{C}\,:\,|z|<R\}$.
\end{B}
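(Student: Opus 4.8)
The plan is to follow Gonchar's method for row sequences: estimate the consecutive differences $\pi_{n+1,m}-\pi_{n,m}$ and sum them. The key algebraic fact is the identity
\[
\pi_{n+1,m}(z)-\pi_{n,m}(z)=\frac{c_n\,z^{n+m+1}}{q_{n,m}(z)\,q_{n+1,m}(z)}
\]
for suitable constants $c_n$. To prove it, set $s_n:=q_{n,m}p_{n+1,m}-q_{n+1,m}p_{n,m}$: from $\deg p_{n,m}\le n$ and $\deg q_{n,m}\le m$ we get $\deg s_n\le n+m+1$, while the identity $s_n=q_{n+1,m}(q_{n,m}f-p_{n,m})-q_{n,m}(q_{n+1,m}f-p_{n+1,m})$ together with the interpolation conditions $q_{n,m}f-p_{n,m}=O(z^{n+m+1})$ and $q_{n+1,m}f-p_{n+1,m}=O(z^{n+m+2})$ shows that $s_n$ vanishes at the origin to order at least $n+m+1$; hence $s_n(z)=c_nz^{n+m+1}$. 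From here on each $q_{n,m}$ is taken monic.

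Next I would extract geometric control from the one-point hypothesis. Let $z_0$ be the point appearing in \eqref{monte}. Since $|\pi_{n+1,m}(z_0)-\pi_{n,m}(z_0)|\le|f(z_0)-\pi_{n+1,m}(z_0)|+|f(z_0)-\pi_{n,m}(z_0)|$, \eqref{monte} and the identity give, for every $\varepsilon>0$ and all large $n$,
\[
|c_n|\ \le\ |q_{n,m}(z_0)\,q_{n+1,m}(z_0)|\;|z_0|^{-(n+m+1)}\,\bigl(|z_0|/R+\varepsilon\bigr)^n .
\]
Fix $r<R$ and split each denominator as $q_{n,m}=q_{n,m}^{\mathrm{in}}q_{n,m}^{\mathrm{out}}$, where $q_{n,m}^{\mathrm{in}}$ collects the zeros of $q_{n,m}$ lying in $\{|w|<2R\}$. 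For $|z|\le r$ one checks that the quotients $|q_{n,m}^{\mathrm{out}}(z_0)|/|q_{n,m}^{\mathrm{out}}(z)|$ (each linear factor with a far-away zero changes by a bounded factor between $z$ and $z_0$) and the values $|q_{n,m}^{\mathrm{in}}(z_0)|$ are bounded by constants independent of $n$; substituting the bound on $|c_n|$ into the identity therefore gives, on $\{|z|\le r\}$,
\[
|\pi_{n+1,m}(z)-\pi_{n,m}(z)|\ \le\ \frac{C\,\rho^{\,n}}{|q_{n,m}^{\mathrm{in}}(z)\,q_{n+1,m}^{\mathrm{in}}(z)|},\qquad \rho<1,
\]
where $C$ and $\rho\in(0,1)$ depend only on $r$, once $\varepsilon$ is chosen small enough.

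To convert this into convergence in capacity, fix $t$ with $\rho<t<1$ and discard, at step $n$, the sublevel set $B_n:=\{z:|q_{n,m}^{\mathrm{in}}(z)\,q_{n+1,m}^{\mathrm{in}}(z)|<t^{\,n}\}$. Off $B_n$ the difference $\pi_{n+1,m}-\pi_{n,m}$ is bounded on $\{|z|\le r\}$ by $C(\rho/t)^n$, which decays geometrically; and since $q_{n,m}^{\mathrm{in}}q_{n+1,m}^{\mathrm{in}}$ is monic of degree at most $2m$, a Cartan-type estimate for sublevel sets of polynomials covers $B_n$ by disks of total radius $O(t^{\,n/2m})$. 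Telescoping, $\pi_{N',m}-\pi_{N,m}=\sum_{n=N}^{N'-1}(\pi_{n+1,m}-\pi_{n,m})$ is then uniformly small on $\{|z|\le r\}$ outside $\bigcup_{n\ge N}B_n$, a set of capacity $O(t^{\,N/2m})\to 0$. Since $r<R$ is arbitrary, $\{\pi_{n,m}\}$ is Cauchy in capacity on $\{|z|<R\}$ and converges there in capacity to some function $\Phi$.

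Finally, I would appeal to Gonchar's Lemma on convergence in capacity: a sequence of rational functions whose denominators have degree at most $m$ and which converges in capacity on a domain has a meromorphic limit there with at most $m$ poles, the convergence being locally uniform off the poles. Thus $\Phi$ is meromorphic with at most $m$ poles on $\{|z|<R\}$; and since the first $n+m+1$ Taylor coefficients of $\pi_{n,m}$ at the origin coincide with those of $f$, the Taylor expansion of $\Phi$ at $0$ equals that of $f$, so $\Phi$ is the asserted meromorphic continuation and $R_m\ge R$. I expect the main difficulty to be the third step — handling the moving poles of the Padé denominators: filtering out the harmless far-away zeros, quantifying the exceptional sets $B_n$ uniformly in $n$, and setting up convergence in capacity so that Gonchar's Lemma applies. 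The telescoping identity and the one-point estimate are, by comparison, routine.
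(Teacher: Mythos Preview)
Your proof is correct and follows the same route as the paper's own argument (Theorem~B being the special case $\Sigma=\{0\}$, $w_n(z)=z^n$ of Proposition~\ref{teorema1b}, which in turn rests on the first half of Proposition~\ref{teorema2}): derive the telescoping identity $\pi_{n+1,m}-\pi_{n,m}=c_n z^{n+m+1}/(q_{n,m}q_{n+1,m})$, bound $\limsup|c_n|^{1/n}$ from the one--point hypothesis, deduce convergence in content of the row sequence on $\{|z|<R\}$, and finish with Gonchar's Lemma. The only differences are in bookkeeping: the paper normalizes the denominators via \eqref{normado} so that the uniform bounds \eqref{desig} are immediate and handles the exceptional sets with the explicit neighborhoods $J_\varepsilon$, while you keep the $q_{n,m}$ monic, split off the far zeros by hand, and quantify the exceptional sets with a Cartan estimate---equivalent devices leading to the same conclusion.
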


From \eqref{monte} it follows that the disk of m-meromorphy $D_m$ is
characterized by the rate of convergence of the Pad\'e approximants
$\pi_{n,m}$ to $f$ on a fixed point $z\not=0$. The proof relies
heavily on the fact that the table of interpolation points is
newtonian (cf. Proposition \ref{teorema1b} below). In the present
paper we focus on the analogous problem for multi-point Pad\'e
approximants where the interpolation points tend to an arbitrary
distribution and we prove that the phenomenon of overconvergence is
not limited to approximants with maximal rate of convergence. This
will be done in Section \ref{exact}, see Theorem \ref{teorema1}.
Section \ref{definitions} contains some definitions and auxiliary
material whereas in Section \ref{convergence} we characterize
certain generalized sets of $m$-meromorphy of $f$ in terms of
convergence in $\sigma$-content (for the definition, see Section
\ref{definitions}) of the multi-point Pad\'e approximants. This
implies that the region where the function $f$ is proved to admit
meromorphic continuation is the largest among those of a given type.
In the case that the table of interpolation points is newtonian, we
provide more detailed descriptions.

%%%%%%%%%%%%%%%%%%%%%%%%%%%%%%%%%%%%%%%%%%%%%%%%%%%%%%%%%%%%%%%%%%%%%%%%%%%%%%%%%%%%%%%%%%%%%%%
%%%%%%%%%%%%%%%%%%%%%%%%%%%%%%%%%%%%%%%%%%%%%%%%%%%%%%%%%%%%%%%%%%%%%%%%%%%%%%%%%%%%%%%%%%%%%
%%%%%%%%%%%%%%%%%%%%%%%%%%%%  DEFINICIONES  %%%%%%%%%%%%%%%%%%%%%%%%%%%%%%%%%%%%%%%%%%%%%%%%%%%%%%
%%%%%%%%%%%%%%%%%%%%%%%%%%%%%%%%%%%%%%%%%%%%%%%%%%%%%%%%%%%%%%%%%%%%%%%%%%%%%%%%%%%%%%%%%%%%%%%%%%%%%%%%%%%%%%%%%%%%%%%%%%%%%%%%%%%%%%%%%%%%%%%%%%%%%%%%%%%%%%%%%%%%%%%%%%%%%%%%%%%%%%%%%%%%%%%%

\section{Definitions and auxiliary results}\label{definitions}
\subsection{Potential theory}
Let $\Sigma$ be a compact set of the complex domain $\mathbb{C}$
with connected complement. Denote the domain
$\overline{\mathbb{C}}\setminus \Sigma$ by $\Omega$.

Let $\mu$ be a positive unit Borel measure supported on $\Sigma$.
The logarithmic potential of $\mu$ is denoted by $P(\mu;z)$ and is
equal to $$\int_\Sigma -\log|z-\zeta|\,d\mu(\zeta).$$

Set
$$
r_0=\inf_{z\in \Sigma}\exp \{-P(\mu;z)\}\ge 0
$$
and
$$
E_\mu(r)=\{z\in\mathbb{C} \, : \,\exp\{ -P(\mu;z)\}< r\},\quad
r>r_0.
$$
Since the function $\exp \{-P(\mu;\cdot)\}$ is upper semi-continuous
on $\mathbb{C}$, $E_\mu(r),\, r>r_0$, is a non-empty bounded open
set that, in general, does not contain the whole of $\Sigma$. By the
same token $\exp \{-P(\mu;\cdot)\}$ attains its maximum on compact
sets. So, for each compact set $K$ let us denote
$$
\rho_\mu(K)=\|\exp\{ -P(\mu;\cdot)\}\|_K.
$$
\begin{lem}\label{lema}
Given any open neighborhood $U$ of $\Sigma$ there exists $r_1>r_0$
such that $E_\mu(r_1) \subset U$.
\end{lem}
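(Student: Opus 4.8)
The plan is to exploit the upper semi-continuity of $\exp\{-P(\mu;\cdot)\}$ together with compactness. First I would pass to the complement: since $U$ is an open neighborhood of $\Sigma$, the set $F = \mathbb{C}\setminus U$ is closed, and because $E_\mu(r)$ is bounded for every $r>r_0$ (as noted in the excerpt), it suffices to control the function on a \emph{compact} piece of $F$. Fix any $r_2 > r_0$; then $E_\mu(r_2)$ is bounded, so $L := \overline{E_\mu(r_2)}\cap F$ is compact (it is closed and bounded). Note that $L\subset F$ means $L\cap\Sigma=\emptyset$, so on $L$ the potential $P(\mu;\cdot)$ is finite-valued, and more to the point, the function $h(z):=\exp\{-P(\mu;z)\}$ is upper semi-continuous on all of $\mathbb{C}$, hence attains a maximum on the compact set $L$. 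Denote $r_1 := \|h\|_L = \rho_\mu(L)$ in the notation of the excerpt (adding a harmless positive constant if one wants strict inequality, but in fact one can work with $<$ directly as follows).

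The key point is that $r_1 > r_0$. Indeed, by definition $r_0=\inf_{z\in\Sigma} h(z)$ and, since $h$ is upper semi-continuous and $\Sigma$ is compact, this infimum need not be attained, but what matters is the following separation: for $z\in\Sigma$ we have $h(z)$ can be small, whereas $L$ stays at positive distance from $\Sigma$ (as $L\subset\mathbb{C}\setminus U$ and $\Sigma\subset U$ with $U$ open), so on $L$ the potential does not blow up toward $+\infty$, i.e.\ $h$ does not collapse to the infimal behavior it exhibits near $\Sigma$. More carefully: pick any point $z_0\in\Sigma$; then $\exp\{-P(\mu;z_0)\}\geq r_0$ and by upper semi-continuity there is a neighborhood where $h < r_0 + \varepsilon$ is \emph{false}, so one should instead argue via the lower bound $r_0>0$ is not guaranteed; the cleanest route is: choose $r_1$ to be \emph{any} number strictly between $r_0$ and $\inf_{z\in L} h(z)$-avoiding region — equivalently, set $r_1=\sup_{z\in L}h(z)$ and observe $r_1\ge r_0$ automatically, then note $r_1=r_0$ is impossible because that would force $h\equiv r_0$ on $L$, contradicting that $h$ is unbounded from below toward $\Sigma$ only. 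I will make this rigorous by contradiction in the last step.

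The conclusion then follows directly from the definitions: if $z\in E_\mu(r_1)$, i.e.\ $h(z) < r_1$, I claim $z\in U$. Suppose not, so $z\in F=\mathbb{C}\setminus U$; since also $z\in E_\mu(r_1)\subset E_\mu(r_2)\subset\overline{E_\mu(r_2)}$, we get $z\in L$, whence $h(z)\le \sup_{w\in L}h(w)=r_1$, which is compatible with $h(z)<r_1$ — so this naive bound is not yet a contradiction and the argument must be tightened: one should instead \emph{define} $r_1$ strictly smaller than the relevant supremum. Concretely: by compactness of $L$ and upper semi-continuity, $s:=\max_{z\in L}h(z)$ is attained; if $s\le r_0$ then trivially $E_\mu(r_0^+)$ works, so assume $s>r_0$, pick $r_1\in(r_0,s]$... the honest fix is to note that what we actually need is $r_1>r_0$ with $E_\mu(r_1)\cap L=\emptyset$, which holds as soon as $r_1\le \min_{z\in L} h(z)$ — but $\min_{z\in L}h(z)$ could be $\le r_0$. \textbf{The main obstacle} is precisely this: showing $\min_{z\in L}h(z) > r_0$, equivalently that the potential is \emph{strictly smaller} (so $h$ strictly larger) away from $\Sigma$ than its supremal-decay rate on $\Sigma$. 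This is the heart of the lemma and follows from a maximum-principle / continuity-of-potential argument: $h$ being upper semi-continuous and $\Sigma$ compact, and $E_\mu(r)$ shrinking to $\Sigma$ as $r\downarrow r_0$ in the sense that $\bigcap_{r>r_0} E_\mu(r)\subset\Sigma$ (itself a consequence of upper semi-continuity, since any $z\notin\Sigma$ with $h(z)\ge r_0$... no — rather, the nested intersection of the closed sets $\{h\ge r\}$ over $r<r_0$ is contained in $\Sigma$ only if... ). I would resolve it by the standard fact that $\{z: h(z)\ge r\}$ is compact for each $r>0$ and decreases to a subset of $\Sigma$ as $r\uparrow$ past $r_0$ is \emph{not} what is claimed; the correct monotonicity is that $E_\mu(r)$ \emph{increases} with $r$ and $\bigcap_{r>r_0}\overline{E_\mu(r)}$ may exceed $\Sigma$. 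Thus the genuine argument: since $U\supset\Sigma$ is open and $\{h\le r_0\}\supset\Sigma$ is... I will instead use that $h$ restricted to the compact set $F\cap\overline{E_\mu(r_2)}$ attains a strictly positive minimum, call it $\delta$, and then $r_1:=\min(\delta, r_2)$ — but need $\delta>r_0$. \emph{This is the crux and I expect it to require the subharmonicity of $-P(\mu;\cdot)$ and the fact that a nonconstant subharmonic function cannot attain on an open set the supremum it has on the support of $\mu$}; granting that, $r_1:=\tfrac12(\delta+r_0)$ with $\delta=\min_{F\cap\overline{E_\mu(r_2)}}h>r_0$ finishes the proof, since then $E_\mu(r_1)\cap F=\emptyset$, i.e.\ $E_\mu(r_1)\subset U$.
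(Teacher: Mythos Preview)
Your proposal has the right skeleton---pass to the closed complement $F=\mathbb{C}\setminus U$, find the minimum of $h=\exp\{-P(\mu;\cdot)\}$ on a compact piece of $F$, and use that as $r_1$---but it leaves the decisive step unproved. You explicitly write ``granting that'' for the inequality $\delta=\min_{L}h>r_0$, and your sketch of why it should hold (``a nonconstant subharmonic function cannot attain on an open set the supremum it has on the support of $\mu$'') points in the wrong direction: the issue is the \emph{infimum} of $-P(\mu;\cdot)$ on $\Sigma$, not its supremum, and you never establish the global lower bound $h(z)\ge r_0$ for all $z\in\mathbb{C}$, without which there is no reason $\delta$ could not fall below $r_0$.

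The paper's proof supplies exactly the two ingredients you are missing. First, the maximum principle for potentials (Saff--Totik, Corollary~II.3.3) gives $P(\mu;z)\le\sup_{\Sigma}P(\mu;\cdot)$ everywhere, i.e.\ $h\ge r_0$ on all of $\mathbb{C}$. Second, letting $z_1\in F$ be a point where $r_1:=\min_{F}h$ is attained (the minimum exists because $h(z)\to\infty$ as $z\to\infty$, which also makes your auxiliary set $L$ and parameter $r_2$ unnecessary), one observes that $P(\mu;\cdot)$ is harmonic near $z_1$ and nonconstant there---nonconstancy uses that $\Omega=\overline{\mathbb{C}}\setminus\Sigma$ is connected, since a harmonic function constant on an open set would be constant on all of $\Omega$, contradicting the behavior at infinity. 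The minimum principle for harmonic functions then yields a nearby $z_0$ with $h(z_0)<h(z_1)=r_1$; combined with $h(z_0)\ge r_0$ this forces $r_1>r_0$. Your argument can be completed along the same lines, but as written it stops precisely at the point where the actual work begins.
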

\begin{proof}
First, notice that $ \exp\{-P(\mu;z)\}\ge r_0 $ for all
$z\in\mathbb{C}$, due to the maximum principle for potentials (see
Corollary II.3.3 in \cite{totik}).

We may assume that $F=\mathbb{C}\setminus U\not=\emptyset$. As
$F\cap\Sigma=\emptyset$ and
$$
-P(\mu;z)=\log|z|+o(1),\quad z\to\infty,
$$
the function $\exp\{ -P(\mu;\cdot)\}$ attains its
minimum on $F$. Put
$$
r_1=\min_{z\in F}\exp\{ -P(\mu;z)\}=\exp\{ -P(\mu;z_1)\}\ge
r_0,\quad z_1\in F.
$$
It is clear that $E_\mu(r_1)\subset U$.  As $z_1\not\in\Sigma$, the
function $P(\mu;\cdot)$ is harmonic on a neighborhood of $z_1$ on
which cannot be constant because the complement of $\Sigma$ has only
one connected component. Therefore, by the minimum principle, there
exists a point $z_0$ verifying $\exp\{-P(\mu;z_0)\}<r_1$, which
implies that $r_1>r_0$.
\end{proof}

If $\capp(\Sigma)>0$ we have
\begin{equation}\label{green2}
g_{\Omega}(z,\infty)=-\log\capp(\Sigma)-P(\mu_\Sigma;z),\quad
z\in\mathbb{C},
\end{equation}
where $\mu_\Sigma$ is the equilibrium measure of the set $\Sigma$.

 Let $\mu_{n}$
and $\mu$ be finite positive Borel measures on
$\overline{\mathbb{C}}$. By $\mu_n \stackrel{
\ast}{\longrightarrow}\mu$, $n\rightarrow\infty$, we denote the
weak$^*$ convergence of $\mu_{n}$ to $\mu$ as $n$ tends to infinity.
This means that for every continuous function $f$ on
$\overline{\mathbb{C}}$ it holds
\[
\lim_{n \rightarrow \infty} \, \int f(x) \,  d\mu_{n}(x)\, =\, \int
f(x)\,  d\mu(x).
\]

If all the measures $\mu_n,\, n\in\mathbb{N}$, are supported on a
fixed compact set $F$ and $\mu_n \stackrel{
\ast}{\longrightarrow}\mu$, $n\rightarrow\infty$, the principle of
descent for potentials (cf. \cite{totik}, Theorem $1.6.8$) says that
\begin{equation}\label{descent}
\liminf_{n\to\infty} P(\mu_n;z_n)\ge  P(\mu;z_0),
\end{equation}
where the sequence $\{z_n\}_{n\in\mathbb{N}}$ tends to
$z_0\in\mathbb{C}$  as $n$ goes to infinity and
\begin{equation}\label{descent2}
\lim_{n\to\infty} P(\mu_n;z)=P(\mu;z),
\end{equation} uniformly on compact subsets of
$\mathbb{C}\setminus F$.

The fine topology on $\mathbb{C}$ is the coarsest topology on
$\mathbb{C}$ for which all superharmonic functions are continuous.
This is finer than ordinary planar topology. If $D$ is a connected
open set then the boundary of $D$ in the fine and Euclidean topology
coincide (see Theorem 5.7.9 in \cite{helms}).

\subsection{Convergence in $\sigma$-content}
Let $A$ be a subset of the complex plane $\mathbb{C}$. By
$\mathcal{U}(A)$ we denote the class of all coverings of $A$ by at
most a numerable set of disks. Set
$$
\sigma(A)=\inf\left\{\sum_{i\in I} |U_i|\,:\,\{U_i\}_{i\in
I}\in\mathcal{U}(A)\right\},
$$
where $|U_i|$ stands for the radius of the disk $U_i$. The quantity
$\sigma(A)$ is called the $1$-dimensional Hausdorff content of the
set $A$. This set function is not a measure but fulfills some good
properties like countable semiadditivity which, for instance, is not
satisfied by the logarithmic capacity.

Let $\{\varphi_n\}_{n\in\mathbb{N}}$ be a sequence of functions
defined on a domain $D$ and $\varphi$ another  function also defined
on $D$. We say that the sequence $\{\varphi_n\}_{n\in\mathbb{N}}$
converges in $\sigma$-content to the function $\varphi$ on compact
subsets of $D$ if for each compact subset $K$ of $D$ and for each
$\varepsilon >0$, we have
$$
\lim_{n\to\infty} \sigma\left(\{z\in K :
|\varphi_n(z)-\varphi(z)|>\varepsilon\}\right)=0.
$$
Such a convergence will be denoted by $\sigma$-$\lim_{n\to\infty}
\varphi_n = \varphi$ in $D$.

The next lemma was proved by Gonchar in \cite{gonfix}.
\begin{Gonchar} Suppose that the sequence $\{\varphi_n\}$ of functions defined
on a domain $D\subset \mathbb{C}$ converges in $\sigma$-content to a
function $\varphi$ on compact subsets of $D$. Then the following
assertions hold true:
\begin{itemize}
\item[i)] If each of the functions $\varphi_n$ is meromorphic in $D$
and has no more than $k<+\infty$ poles in this domain, then the
limit function $\varphi$ is also meromorphic (more precisely, it is
equal to a meromorphic function in $D$ except on a set of
$\sigma$-content zero) and has no more than $k$ poles in $D$.

\item[ii)] If each function $\varphi_n$ is meromorphic and has no more than $k<+\infty$ poles
 in $D$ and the function $\varphi$ is meromorphic and has exactly $k$ poles in
 $D$, then all $\varphi_n,\,n\ge N$, also have $k$ poles in $D$; the poles of $\varphi_n$
 tend to the poles $z_1,,\dots,z_k$ of $\varphi$ (taking account of their orders) and
 the sequence $\{\varphi_n\}$ tends to $\varphi$ uniformly on compact subsets of
 the domain $D^{\prime}=D\setminus \{z_1,,\dots,z_k\}$.
\end{itemize}
\end{Gonchar}

\subsection{Multipoint Pad\'e approximants}

Let $f$ be an holomorphic function on a neighborhood $V$ of the
compact set $\Sigma\subset\mathbb{C}$. Let us fix a
family of monic polynomials
\begin{equation}\label{interpolacion}
w_n(z)=\,\, \prod_{i=1}^{n}(z-\alpha_{n,i}),\qquad n\in\mathbb{N},
\end{equation}
whose zeros are contained in $\Sigma$. It is easy to verify that for
each pair of nonnegative integers $n$ and $m,\, n\ge m$, there
exists polynomials $P$ and $Q$ satisfying
\begin{itemize}
\item[-] $\deg P\le n-m,\,  \deg Q\le m$, and $Q\not\equiv 0.$

\item[-]  $(Q\,f-P)/w_{n+1}\in{\mathcal{H}}(V)$,  where
${\mathcal{H}}(V)$ denotes the space of analytic functions in $V$.
\end{itemize}
Any pair of such polynomials $P$ and $Q$ defines a unique rational
function $\Pi_{n,m} = P /Q $ which is called the multi-point Pad\'e
approximant of type $(n,m)$ of $f$.

By requiring $\deg P\le n-m$ we have slightly modified the usual
definition of a multi-point Pad\'e approximant of type $(n,m)$  in
an equivalent form which is more suitable for the purposes of the
paper. Let $\Pi_{n,m} = P_{n,m}/Q_{n,m}$ where $Q_{n,m}$ and
$P_{n,m}$ are polynomials obtained eliminating all common zeros and,
unless otherwise stated, normalizing $Q_{n,m}$ so that
\begin{equation} \label{normado} Q_{n,m}(z) = \prod_{|\zeta_{n,k}|
\leq 1} \left(z - \zeta_{n,k}\right) \prod_{|\zeta_{n,k}| > 1}
\left(1- \frac{z}{\zeta_{n,k}}\right).
\end{equation}

Let $R_{\mu,m},\,m\in\mathbb{Z}_+$, be the supremum of the numbers
$r> r_0$ such that $f$ admits meromorphic continuation with at most
$m$ poles on $E_\mu(r)$. Lemma \ref{lema} proves that $R_{\mu,m}>r_0$.
We define the set of $m$-meromorphy of $f$ relative to $\mu $ as the
set $E_\mu(R_{\mu,m})$ and we denote it by $D_{\mu,m}$. It is easy
to see that $f$ admits meromorphic continuation with at most $m$
poles on $D_{\mu,m}$.

For a given polynomial $p$, we denote by $\Theta_p$ the normalized
zero counting measure of $p$. That is,
$$
\Theta_p=\frac{1}{\deg p}\sum_{\xi:\, p(\xi)=0}\delta_{\xi}.
$$
The sum is taken over all the zeros of $p$ and $\delta_{\xi}$
denotes the Dirac measure concentrated at $\xi$. It is said that the
sequence of interpolation points given by the polynomials $
\{w_n\}_{n\in\mathbb{N}}$ has the measure $\mu$ as its asymptotic
zero distribution if
$$
\Theta_{w_n} \stackrel{ \ast}{\longrightarrow}\mu, \quad
n\rightarrow\infty.
$$

The following theorem is an analog of Montessus de Ballore's theorem
(see \cite{montessus}) for multi-point Pad\'e approximants with
arbitrary distribution of interpolation points. It was proved in
\cite{warner} (cf. \cite{wallin}) and constitutes a straightforward
generalization of a previous result by Saff \cite{saff2}.

\begin{thm}\label{thwarner} Let the measure $\mu$ be
the asymptotic zero distribution of the sequence of interpolation
points given by $ \{w_n\}_{n\in\mathbb{N}}$. Suppose that $f$ has
exactly $m$ poles on $D_{\mu,m}$. Then
\begin{itemize}
\item[i)] For all $n\ge n_0$, $\Pi_{n,m}$ has exactly $m$ poles
which converge according to their multiplicity to the poles of the
function $f$ as $n$ tends to infinity.
\item[ii)] For each compact subset $K\subset D_{\mu,m}$
not containing any
pole of the function $f$, it holds
\begin{equation}\label{montessus}
\limsup_{n\to\infty}\|f-\Pi_{n,m}\|^{1/n}_K\le\frac{\rho_\mu(K)}{R_{\mu,
m}}<1. \end{equation}
\end{itemize}
\end{thm}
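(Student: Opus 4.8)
The plan is to reduce the statement to the known convergence theory for multipoint Padé approximants by exploiting that $f$ is meromorphic with exactly $m$ poles on $D_{\mu,m}=E_\mu(R_{\mu,m})$. Write $f = h/P_m$ where $P_m$ is the monic polynomial of degree $m$ whose zeros are the poles of $f$ in $D_{\mu,m}$ (with multiplicities) and $h$ is holomorphic and zero-free at those points on a neighborhood of $\Sigma$ and, in fact, on $D_{\mu,m}$. The first step is the standard linearization: from the defining interpolation condition $(Q_{n,m}f-P_{n,m})/w_{n+1}\in\mathcal H(V)$, multiply through by $P_m$ to get that $Q_{n,m}h-P_{n,m}P_m$ vanishes at the interpolation nodes to the prescribed order. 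The error then admits an integral (Hermite) representation
\begin{equation}\label{hermiterep}
f(z)-\Pi_{n,m}(z)=\frac{1}{Q_{n,m}(z)P_m(z)}\cdot\frac{w_{n+1}(z)}{2\pi i}\oint_\Gamma\frac{Q_{n,m}(\zeta)h(\zeta)}{w_{n+1}(\zeta)(\zeta-z)}\,\frac{d\zeta}{P_m(\zeta)},
\end{equation}
valid for $\Gamma$ a contour in $V$ encircling $\Sigma$ and $z$ inside. This is the analog of the classical Montessus-type integral formula, and it is where the modified degree convention $\deg P\le n-m$ is convenient.

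The second step is to control the denominators $Q_{n,m}$. Using \eqref{hermiterep} together with an a priori bound on $f$ on a fixed contour and the normalization \eqref{normado}, one shows by a normal-families / compactness argument that, along any subsequence, the normalized polynomials $Q_{n,m}$ converge to some monic polynomial $\widehat Q$ of degree at most $m$; the content here is to prove that in fact $\widehat Q=P_m$, i.e. the limit is exactly the pole polynomial. This is done by feeding the candidate limit back into \eqref{hermiterep}: if $\widehat Q$ had a zero $z_0$ strictly inside $D_{\mu,m}$ that is not a pole of $f$, then near $z_0$ the left-hand side of \eqref{hermiterep} would stay bounded while $1/Q_{n,m}(z_0)\to\infty$, forcing the numerator integral to decay, and a Rouché-type argument contradicts $\deg\widehat Q\le m$ unless all zeros are accounted for by the actual poles. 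Because $f$ has exactly $m$ poles on $D_{\mu,m}$, this yields $\deg\widehat Q=m$ and $\widehat Q=P_m$; since the limit is the same along every subsequence, $Q_{n,m}\to P_m$ with $\deg Q_{n,m}=m$ for $n\ge n_0$, and the zeros of $Q_{n,m}$ converge to those of $P_m$ with multiplicity. This proves part (i).

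The third step extracts the geometric rate. Having established $Q_{n,m}\to P_m$ uniformly on compacta, the factor $1/(Q_{n,m}(z)P_m(z))$ in \eqref{hermiterep} is uniformly bounded on any compact $K\subset D_{\mu,m}$ avoiding the poles of $f$. It remains to estimate the remaining factor
$$
\left|\frac{w_{n+1}(z)}{2\pi i}\oint_\Gamma\frac{Q_{n,m}(\zeta)h(\zeta)}{w_{n+1}(\zeta)(\zeta-z)}\,\frac{d\zeta}{P_m(\zeta)}\right|^{1/n}.
$$
Choosing $\Gamma$ close to $\Sigma$ and using $|w_{n+1}(z)|^{1/n}\to\exp\{-P(\mu;z)\}$ on $K$ (uniformly, via \eqref{descent2}, since $\Theta_{w_{n+1}}\stackrel{\ast}{\to}\mu$ and the zeros lie on $\Sigma$) together with $|w_{n+1}(\zeta)|^{1/n}\to\exp\{-P(\mu;\zeta)\}$ on $\Gamma$, the $n$-th root of the expression is asymptotically bounded by
$$
\frac{\|\exp\{-P(\mu;\cdot)\}\|_K}{\min_{\zeta\in\Gamma}\exp\{-P(\mu;\zeta)\}}=\frac{\rho_\mu(K)}{\min_{\zeta\in\Gamma}\exp\{-P(\mu;\zeta)\}}.
$$
Letting $\Gamma$ shrink so that $\min_{\zeta\in\Gamma}\exp\{-P(\mu;\zeta)\}$ increases toward $R_{\mu,m}$ — which is possible precisely because $f$ (equivalently $h/P_m$) is holomorphic on all of $E_\mu(R_{\mu,m})$ away from the $m$ poles, so $\Gamma$ may be taken to be a level curve $\{\exp\{-P(\mu;\cdot)\}=r\}$ with $r$ arbitrarily close to $R_{\mu,m}$ — gives \eqref{montessus}. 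The strict inequality $\rho_\mu(K)/R_{\mu,m}<1$ holds since $K\subset E_\mu(R_{\mu,m})$ means $\exp\{-P(\mu;z)\}<R_{\mu,m}$ on $K$, hence $\rho_\mu(K)<R_{\mu,m}$ by compactness.

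The main obstacle is the second step: showing that the weak limit of the normalized denominators $Q_{n,m}$ is exactly the pole polynomial $P_m$ rather than some polynomial of smaller degree with "spurious" factors escaping to the unbounded component or to $\partial D_{\mu,m}$. Handling this requires carefully ruling out zeros of $Q_{n,m}$ accumulating on $\Sigma$ or on $\partial D_{\mu,m}$ and combining the Hermite formula \eqref{hermiterep} with the hypothesis that $f$ has *exactly* $m$ poles on $D_{\mu,m}$, which is what pins down the degree. Once the denominator asymptotics are in hand, parts (i) and the rate estimate in (ii) follow by the routine potential-theoretic estimates sketched above; indeed, one may alternatively deduce (i) a posteriori from the rate in (ii) via Gonchar's Lemma applied on $D_{\mu,m}$, but establishing the rate already needs control of $Q_{n,m}$, so the denominator analysis is unavoidable.
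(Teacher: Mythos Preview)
Your proposal follows the classical Saff--Warner route and is essentially correct, but the paper proceeds quite differently, and in fact contradicts your closing remark that ``the denominator analysis is unavoidable.''

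The paper does not prove Theorem~\ref{thwarner} directly. Instead it first proves Lemma~\ref{lema3}, which establishes the rate
\[
\limsup_{n\to\infty}\|f-\Pi_{n,m}\|_{K(\varepsilon)}^{1/n}\le \frac{\rho_\mu(K)}{R_{\mu,m}}
\]
on the $\varepsilon$-thinned compacts $K(\varepsilon)$, using only the Cauchy integral formula together with the crude a~priori bounds \eqref{desig} on $Q_{n,m}$ coming from the normalization \eqref{normado}. No convergence of the denominators is needed; the lower bound $\min_{z\in K(\varepsilon)}|Q_{n,m}(z)|>C_2 n^{-2m}$ suffices and contributes nothing to the $n$-th root. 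This already gives convergence in $\sigma$-content on $D_{\mu,m}$ at the stated geometric rate. Then the paper simply observes that Theorem~\ref{thwarner} follows from Lemma~\ref{lema3} and part~ii) of Gonchar's Lemma: since $f$ has exactly $m$ poles on $D_{\mu,m}$ and each $\Pi_{n,m}$ has at most $m$ poles, Gonchar's Lemma~ii) upgrades $\sigma$-content convergence to uniform convergence on compacta of $D_{\mu,m}\setminus\{z_1,\dots,z_m\}$ and forces the poles of $\Pi_{n,m}$ to converge to those of $f$. Both (i) and (ii) drop out at once.

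So the comparison is this: your approach front-loads the work into showing $Q_{n,m}\to P_m$ via compactness and a Rouch\'e-type argument, and then reads off the rate from the Hermite representation with the denominator under control. The paper's approach bypasses the denominator analysis entirely by working in $\sigma$-content, where the normalization bounds \eqref{desig} are enough, and then invokes Gonchar's Lemma to recover pole convergence and uniform convergence a~posteriori. Your route is the historically original one and is self-contained; the paper's route is shorter once Gonchar's Lemma is available and illustrates why $\sigma$-content is a natural intermediate notion here. A minor slip in your write-up: you first say ``$\Gamma$ close to $\Sigma$'' and later push $\Gamma$ toward $\partial D_{\mu,m}$; the latter is what you need, and $\Gamma$ must lie in $(D_{\mu,m}\cup V)\setminus(\overline{E_\mu(R_{\mu,m}-\eta)}\cup\Sigma)$ as in the proof of Lemma~\ref{lema3}, not merely in $V$.
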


 If, in the above theorem, we take $w_n(z)=z^n,
n\in\mathbb{N}$, it follows that $\mu\equiv\delta_0$,
$\Sigma=\{0\}$, $D_{\mu,m}=D_m$, and $R_{\mu,m}=R_m$; therefore
regaining Montessus de Ballore's theorem. If we take $\Sigma$ to be
a regular compact set and $\mu$ precisely its equilibrium measure we
obtain the same result as that of \cite{saff2}.

It is possible to prove an analog of \eqref{montessus} regardless
whether or not the function $f$ has exactly $m$ poles on
$D_{\mu,m}$. To this end, we need some additional definitions. Take
an arbitrary $\varepsilon > 0$ and define the open set
$J_\varepsilon$ as follows. For $n \geq m$, let $J_{n,\varepsilon}$
denote the $\varepsilon/6mn^2$-neighborhood of the set of zeros of
$Q_{n,m}$ and let $J_{m-1,\varepsilon}$ denote the
$\varepsilon/6m$-neighborhood of the set of poles of $f$ in
$D_{\mu,m}(f)$. Set $J_\varepsilon = \cup_{n \ge m -1}
J_{n,\varepsilon}$. We have $\sigma(J_{\varepsilon}) < \varepsilon$
and $J_{\varepsilon_1} \subset J_{\varepsilon_2}$ for $
\varepsilon_1 < \varepsilon_2$. For any set $B \subset {\mathbb{C}}$
we put ${B}(\varepsilon) = {B} \setminus J_{\varepsilon} $.

From these properties it readily follows that if
$\{\varphi_n\}_{n\in\mathbb{N}}$ converges uniformly to the function
$\varphi$ on ${K}(\varepsilon)$ for every compact ${K} \subset D$
and for each $\varepsilon >0$, then $\{\varphi_n\}_{n\in\mathbb{N}}$
converges in $\sigma$-content to $\varphi$ on compact subsets of
$D$.

Due to the normalization (\ref{normado}), for any compact set $K$ of
$\mathbb{C}$ and for every $\varepsilon >0$, there exist positive
constants $C_1, C_2$, independent of $n$, such that
\begin{equation} \label{desig} \|Q_{n,m}\|_{K} < C_1,
\qquad \min_{z \in {K}(\varepsilon)}|Q_{n,m}(z)| > C_2 n^{-2m},
\end{equation}
where the second inequality is meaningful when ${K}(\varepsilon)$ is
a non-empty set.

\begin{lem}\label{lema3}  Let the measure $\mu$ be
the asymptotic zero distribution of the sequence of interpolation
points given by $ \{w_n\}_{n\in\mathbb{N}}$. Then, for each compact
set $K\subset D_{\mu,m}$ and for each $\varepsilon>0$, it holds
\begin{equation}\label{contenido}
 \limsup_{n\to\infty} \|f-\Pi_{n,m}\|^{1/n}_{K(\varepsilon)} \le
\frac{\rho_\mu(K)}{R_{\mu,m}}<1.
\end{equation}
\end{lem}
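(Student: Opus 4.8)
The plan is to obtain \eqref{contenido} from the standard integral representation of the interpolation error together with the normalization \eqref{normado} and the potential-theoretic estimate \eqref{descent2}. Fix a compact set $K \subset D_{\mu,m}$ and $\varepsilon > 0$. Choose $r$ with $\rho_\mu(K) < r < R_{\mu,m}$ (shrinking later); by definition of $R_{\mu,m}$ the function $f$ is meromorphic with at most $m$ poles on the open set $E_\mu(r)$, and by Lemma \ref{lema} there is a neighborhood $U$ of $\Sigma$ with $E_\mu(r_1) \subset U \subset V$ for some $r_1 > r_0$. Write the at most $m$ poles of $f$ in $E_\mu(r)$, repeated according to multiplicity, and let $S$ be the monic polynomial vanishing at them; then $g := Sf$ is holomorphic on $E_\mu(r)$. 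The first step is to fix a cycle $\Gamma$ in $E_\mu(r) \setminus K$ that separates $K$ (and the poles of $f$) from the complement of $E_\mu(r)$ and along which $\exp\{-P(\mu;\cdot)\} $ is bounded above by $r$; this is possible because $\exp\{-P(\mu;\cdot)\}$ is upper semicontinuous and the level set $E_\mu(r)$ is open.

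The second step is to set up the error formula. Because $(Q_{n,m}f - P_{n,m})/w_{n+1} \in \mathcal H(V)$ and $\deg P_{n,m} \le n-m$, $\deg Q_{n,m} \le m$, a residue computation gives, for $z \in K$ lying inside $\Gamma$,
\begin{equation}\label{errrep}
f(z) - \Pi_{n,m}(z) = \frac{1}{2\pi i\, Q_{n,m}(z)} \int_{\Gamma} \frac{w_{n+1}(z)}{w_{n+1}(\zeta)}\cdot\frac{Q_{n,m}(\zeta) f(\zeta)}{\zeta - z}\, d\zeta ,
\end{equation}
after one checks that the contributions of the at most $m$ poles of $f$ inside $\Gamma$ cancel against the factor $Q_{n,m}$ (here one uses that $Q_{n,m}$ has degree at most $m$; alternatively carry the polynomial $S$ through the computation and absorb it). Now estimate: on $\Gamma$ we have $|w_{n+1}(\zeta)|^{1/(n+1)} \to \exp\{-P(\mu;\zeta)\} \le r$ uniformly by \eqref{descent2} applied to the measures $\Theta_{w_{n+1}} \xrightarrow{\ast} \mu$ (note $\Gamma \subset \mathbb C \setminus \Sigma$ and the zeros of $w_{n+1}$ lie in $\Sigma$); on $K$ we have $|w_{n+1}(z)|^{1/(n+1)} \to \exp\{-P(\mu;z)\} \le \rho_\mu(K)$, but since $P(\mu;\cdot)$ is only upper semicontinuous we only get $\limsup_n \|w_{n+1}\|_K^{1/(n+1)} \le \rho_\mu(K)$, which suffices. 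Using $\|Q_{n,m}\|_\Gamma < C_1$ and $\min_{z \in K(\varepsilon)} |Q_{n,m}(z)| > C_2 n^{-2m}$ from \eqref{desig}, taking $n$-th roots in \eqref{errrep} kills the polynomial factor $C_2 n^{-2m}$ in the limit and yields
$$
\limsup_{n\to\infty} \|f - \Pi_{n,m}\|_{K(\varepsilon)}^{1/n} \le \frac{\rho_\mu(K)}{r}.
$$
Letting $r \uparrow R_{\mu,m}$ gives \eqref{contenido}; the strict inequality $\rho_\mu(K)/R_{\mu,m} < 1$ holds because $K \subset D_{\mu,m} = E_\mu(R_{\mu,m})$ means $\rho_\mu(K) = \|\exp\{-P(\mu;\cdot)\}\|_K < R_{\mu,m}$, the maximum being attained by upper semicontinuity.

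The main obstacle is the careful justification of the cancellation of the poles of $f$ in the residue formula \eqref{errrep}: one must verify that $\Pi_{n,m}$, defined via the ratio $P_{n,m}/Q_{n,m}$ of the reduced polynomials, really equals the function produced by the contour integral, i.e. that the common-factor cancellations implicit in passing from $P,Q$ to $P_{n,m},Q_{n,m}$ do not spoil the degree bookkeeping. A secondary technical point is that $\Gamma$ must be chosen to avoid the (moving) zeros of the $Q_{n,m}$ uniformly in $n$ — but this is exactly what the set $J_\varepsilon$ and the construction of $K(\varepsilon)$ provide, since for $z \in K(\varepsilon)$ the denominator is controlled by \eqref{desig}, and on the fixed contour $\Gamma$ (which we may take to lie in $E_\mu(r) \setminus U'$ for a slightly larger neighborhood $U'$ of $\Sigma$) the bound $\|Q_{n,m}\|_\Gamma < C_1$ already handles the numerator. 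Everything else is a routine $n$-th root asymptotic.
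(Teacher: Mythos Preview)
Your approach is the same as the paper's---Hermite contour integral for the interpolation remainder, the normalization bounds \eqref{desig}, the descent principle for $|w_{n+1}|^{1/n}$, and a final passage $r\uparrow R_{\mu,m}$---but two of your steps are not right as written.

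First, the error formula you label \eqref{errrep} is false in general: the integrand $Q_{n,m}(\zeta)f(\zeta)/[w_{n+1}(\zeta)(\zeta-z)]$ has genuine poles at the poles $z_1,\dots,z_\nu$ of $f$ inside $\Gamma$, and there is no reason these residues vanish (the zeros of $Q_{n,m}$ need not sit at the $z_i$). Your parenthetical alternative ``carry the polynomial $S$ through'' is exactly what must be done, and it is what the paper does: with $Q_m=S$ one has $\dfrac{Q_m Q_{n,m} f-Q_m P_{n,m}}{w_{n+1}}\in\mathcal H(D_{\mu,m}\cup V)$, Cauchy's formula applies cleanly, and the extra factor $Q_m$ is harmless in the $n$-th-root limit since $K(\varepsilon)$ avoids the $z_i$ by construction of $J_{m-1,\varepsilon}$.

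Second, your contour estimate on $\Gamma$ points the wrong way. Since $w_{n+1}(\zeta)$ sits in the \emph{denominator}, you need a \emph{lower} bound $\exp\{-P(\mu;\zeta)\}\ge r'$ on $\Gamma$, not the upper bound $\le r$ that you record. Placing $\Gamma$ merely inside $E_\mu(r)$ gives no such lower bound. The paper fixes this by taking $\Gamma$ in $\bigl(D_{\mu,m}\cup V\bigr)\setminus\bigl(\overline{E_\mu(R_{\mu,m}-\eta)}\cup\Sigma\bigr)$, so that $\exp\{-P(\mu;z)\}>R_{\mu,m}-\eta$ on $\Gamma$; then \eqref{descent2} yields $\min_{\zeta\in\Gamma}|w_{n+1}(\zeta)|^{1/n}\to$ a quantity $\ge R_{\mu,m}-\eta$, and one lets $\eta\downarrow 0$. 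With these two corrections your argument becomes the paper's proof.
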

\begin{proof} Fix a compact set $K\subset D_{\mu,m}$ and
$\varepsilon>0$. Let $Q_m(z)=\prod_{i=1}^\nu (z-z_i)$, where
$z_i,\,i=1,\dots,\nu\le m,$ are the poles of $f$ on $D_{\mu,m}$.
From the definition of multi-point Pad\'e approximants it follows
that the function
$$
\frac{Q_m\,Q_{n,m}\,f-Q_m\,P_{n,m}}{w_{n+1}}
$$
is holomorphic on $D_{\mu,m}\cup V$. Let $\eta>0$ arbitrarily small.
Let $\Gamma$ be a cycle contained in
$$
\left\{D_{\mu,m}\cup V \right\}\setminus
\left\{\overline{E_\mu(R_{\mu,m}-\eta)}\cup \Sigma\right\}
$$
homologous to $0$ in $D_{\mu,m}\cup V$, with winding number equal to
$1$ for all the poles $z_i,\,i=1,\dots,\nu,$ as well as the points
of $K\cup \Sigma$. This can be done because the compact set $
\overline{E_\mu(R_{\mu,m}-\eta)}\cup \Sigma $ is included in the
open set $D_{\mu,m}\cup V$. Then, for all $z\in\Gamma$, we have
\begin{equation}\label{acot}
\exp\{-P(\mu;z)\}>R_{\mu, m}-\eta.
\end{equation}
Applying the Cauchy integral formula, we obtain
\begin{multline*}
\dst\frac{[Q_m\,Q_{n,m}\,f-Q_m\,P_{n,m}](z)}{w_{n+1}(z)}=\dst\frac{1}{2\pi
i}\dst\int_{\Gamma}
\frac{[Q_m\,Q_{n,m}\,f](\zeta)}{w_{n+1}(\zeta)}\,
\frac{d\zeta}{\zeta-z}\\
\\ \dst
-\int_{\Gamma} \frac{[Q_m\,P_{n,m}](\zeta)}{w_{n+1}(\zeta)}\,
\frac{d\zeta}{\zeta-z}=\dst \frac{1}{2\pi i}\int_{\Gamma}
\frac{[Q_m\,Q_{n,m}\,f](\zeta)}{w_{n+1}(\zeta)}\,\frac{d\zeta}{\zeta-z},
\end{multline*}
for $z\in K$, where the second integral after the first equality is
zero due to the fact that the integrand is an analytic function
outside $\Gamma$ with a zero of multiplicity at least two at
infinity.

 Then,
for all $z\in K$
$$
[Q_m\,Q_{n,m}\,(f-\Pi_{n,m})](z)= \frac{w_{n+1}(z)}{2\pi
i}\int_{\Gamma} \frac{[Q_m\,Q_{n,m}\,f](\zeta)}{w_{n+1}(\zeta)}\,
\frac{d\zeta}{\zeta-z}
$$
and using \eqref{desig} we arrive at
\begin{equation}\label{desigualdad}
|f(z)-\Pi_{n,m}(z)|\le
C(K,\eta,\varepsilon)\,n^{2m}\frac{\|w_{n+1}\|_K}{\dst\min_{\zeta\in\Gamma}
|w_{n+1}(\zeta)|},
\end{equation}
for all $z\in K(\varepsilon)$, where the constant
$C(K,\eta,\varepsilon)$ is independent of $n$.

As the measure $\mu$ is the asymptotic zero distribution of the
sequence $\{w_n\}_{n\in\mathbb{N}}$, using \eqref{descent} and
\eqref{descent2}, we have
\begin{equation}\label{descenso1}
\limsup_{n\to\infty}|w_n(z_n)|^{1/n}\le e^{- P(\mu;z_0)},
\end{equation}
where the sequence $\{z_n\}_{n\in\mathbb{N}}$ tends to
$z_0\in\mathbb{C}$ as $n$ goes to infinity and
\begin{equation}\label{descenso2}
\lim_{n\to\infty}|w_n(z)|^{1/n}=e^{- P(\mu;z)},
\end{equation}
uniformly on compact subsets of $\mathbb{C}\setminus \Sigma$.

It follows from (\ref{descenso1}) that
\begin{equation}\label{descenso3}
\limsup_{n\to\infty}\|w_{n+1}\|_K^{1/n}\le\rho_{\mu}(K).
\end{equation}

Therefore, with the aid of (\ref{desigualdad}), (\ref{descenso2}),
(\ref{descenso3}), and (\ref{acot}), we obtain
$$
\limsup_{n\to\infty} \|f-\Pi_{n,m}\|_{K(\varepsilon)}^{1/n}\le
\frac{\rho_{\mu}(K)}{R_{\mu,m}-\eta}
$$
and formula \eqref{contenido} follows from the above expression
making $\eta$ go to $0$.
\end{proof}
Obviously, a proof of Theorem \ref{thwarner} may be obtained by
using Lemma \ref{lema3} and part ii) of Gonchar's lemma.

 Let us denote by $R^\prime_{\mu,m}$ the supremum of the numbers $r>r_0$ such
that the sequence $\{\Pi_{n,m}\}_{n\ge m}$ converges in
$\sigma$-content on compact subsets of $E_\mu(r)$. Due to
 Lemma \ref{lema3}, we have $0<R_{\mu,m}\le
R^\prime_{\mu,m}$ . On the other hand, from part i) of Gonchar's
lemma it follows that $f$ admits meromorhic continuation with at
most $m$ poles in the region where the sequence $\{\Pi_{n,m}\}_{n\ge
m}$ converges in $\sigma$-content. Therefore, it is clear that $
R_{\mu,m}=R^\prime_{\mu,m}$ and we could have used this property for
an alternative definition of $R_{\mu,m}$.

Finally, using \eqref{green2} for the compact set $K$, it is very
easy to see that the following corollary is essentially equivalent
to one of the implications in Theorem A.
\begin{cor}\label{teoremaa}
Let $K$ be a regular compact set and let $\{r_n\}_{n\ge m}$,
$r_n\in\mathcal{R}_{n-m,m}$, be a sequence of rational functions
satisfying
$$
\limsup_{n\to\infty} \sqrt[n]{\|f-r_n\|_K}\le\frac{ \capp
(K)}{\theta}<1.
$$
Then
$$
\sigma\mbox{-}\lim_{n\to\infty} r_n = f \quad \mbox{in}\quad
E_{\mu_K}(\theta),
$$
where $\mu_K$ is the equilibrium measure of $K$.
\end{cor}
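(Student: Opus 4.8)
The plan is to reduce the statement to the classical Bernstein--Walsh mechanism underlying Gonchar's proof of Theorem~A, using the material already assembled in this section. First I would record the translation that makes the preceding remark precise: by \eqref{green2} applied to $K$ one has $\exp\{g_G(z,\infty)\}=\exp\{-P(\mu_K;z)\}/\capp(K)$, so $E_{\mu_K}(\theta)$ coincides with the set $E(\theta/\capp(K))$ of Theorem~A; moreover $d_{n-m,m}(K)\le\|f-r_n\|_K$ gives $\limsup_n\sqrt[n]{d_{n-m,m}(K)}\le\capp(K)/\theta$, which equals $\limsup_j\sqrt[j]{d_{j,m}(K)}$ (the shift by the fixed integer $m$ disappears under roots). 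Thus the meromorphic continuation part is exactly Theorem~A. Since the statement also asserts convergence of the particular sequence $\{r_n\}$ in $\sigma$-content, I would give a direct argument that produces both at once.

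Write $r_n=p_n/q_n$ in lowest terms, $\deg p_n\le n-m$, $\deg q_n\le m$, and normalize $q_n$ by \eqref{normado}; then \eqref{desig} is available, with the exceptional open set $J_\varepsilon$ now built from the zeros of the $q_n$, $n\ge m$, exactly as before, so $\sigma(J_\varepsilon)<\varepsilon$. Consider the polynomials $h_n:=q_{n+1}p_n-q_np_{n+1}$, of degree at most $n+1$. From the identity $h_n=-q_{n+1}\,q_n(f-r_n)+q_n\,q_{n+1}(f-r_{n+1})$ and $\|q_n\|_K\le C_1$ one gets $\|h_n\|_K\le C_1^2(\|f-r_n\|_K+\|f-r_{n+1}\|_K)$, whence $\limsup_n\|h_n\|_K^{1/n}\le\capp(K)/\theta$.

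Next I would apply the Bernstein--Walsh inequality: for every compact $K'\subset\mathbb{C}$, $\|h_n\|_{K'}\le\|h_n\|_K\exp\{(n+1)\|g_G(\cdot,\infty)\|_{K'}\}$, so, using \eqref{green2} once more, $\limsup_n\|h_n\|_{K'}^{1/n}\le\rho_{\mu_K}(K')/\theta$. Since $r_{n+1}-r_n=-h_n/(q_nq_{n+1})$ and, by the second inequality in \eqref{desig}, $|q_n(z)q_{n+1}(z)|\ge C_2^2(n(n+1))^{-2m}$ for $z\in K'(\varepsilon)$, it follows that $\limsup_n\|r_{n+1}-r_n\|_{K'(\varepsilon)}^{1/n}\le\rho_{\mu_K}(K')/\theta$. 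For a compact $K'\subset E_{\mu_K}(\theta)$ one has $\rho_{\mu_K}(K')<\theta$, so the telescoping series $r_m+\sum_{n\ge m}(r_{n+1}-r_n)$ converges geometrically and uniformly on every such $K'(\varepsilon)$; its sum is, by the criterion for $\sigma$-content convergence recalled just before \eqref{desig} together with part~i) of Gonchar's lemma, the $\sigma$-limit of $\{r_n\}$ on $E_{\mu_K}(\theta)$, meromorphic there with at most $m$ poles. Finally, $\limsup_n\sqrt[n]{\|f-r_n\|_K}<1$ forces $r_n\to f$ uniformly on $K$, and since $\exp\{-P(\mu_K;z)\}=\capp(K)<\theta$ for all $z\in K$ outside a set of capacity zero, $K\setminus E_{\mu_K}(\theta)$ has $\sigma$-content zero; comparing the two limits on $K\cap K'(\varepsilon)$ identifies the $\sigma$-limit with $f$, which is the assertion.

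I expect the only genuinely substantive ingredients to be the two estimates that control $h_n$ from above off $K$ (Bernstein--Walsh) and $q_nq_{n+1}$ from below off the thin set $J_\varepsilon$ (via the normalization \eqref{normado})---this is precisely Gonchar's mechanism. Everything else---the algebraic identity for $h_n$, the translation through \eqref{green2}, the polynomial factors $n^{\pm 2m}$ that vanish under $n$-th roots, and the identification of the $\sigma$-limit with $f$ using that $K$ lies in $E_{\mu_K}(\theta)$ up to a set of zero $\sigma$-content---is routine bookkeeping with the tools of this section.
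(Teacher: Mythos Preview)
Your argument is correct and is precisely the Gonchar mechanism underlying one direction of Theorem~A, which is all the paper invokes: it gives no separate proof of the corollary, merely remarking that, via \eqref{green2}, the statement is ``essentially equivalent to one of the implications in Theorem~A.'' You have written out the details that the paper leaves to the reference \cite{gonsaff}---the Bernstein--Walsh bound on the polynomials $h_n=q_{n+1}p_n-q_np_{n+1}$, the lower bound on $|q_nq_{n+1}|$ off $J_\varepsilon$ from the normalization \eqref{normado}, and the telescoping plus Gonchar's lemma---so the two approaches coincide in substance.

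One small simplification for the final identification step: since $K$ is regular, Frostman's theorem gives $P(\mu_K;z)=-\log\capp(K)$ for \emph{every} $z\in K$ (not merely quasi-everywhere), so $K\subset E_{\mu_K}(\theta)$ with no exceptional set. Hence $r_n\to f$ uniformly on the compact subset $K$ of $E_{\mu_K}(\theta)$, and comparing with the $\sigma$-limit there (each component of $E_{\mu_K}(\theta)$ meets $K$, by the minimum principle for $g_G$) identifies it with $f$ by the identity theorem; you do not need the detour through a capacity-zero, hence $\sigma$-null, exceptional set.
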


%%%%%%%%%%%%%%%%%%%%%%%%%%%%%%%%%%%%%%%%%%%%%%%%%%%%%%%%%%%%%%%%%%%%%%%%%%%%%%%%%%%%%%%%%%%%%%%
%%%%%%%%%%%%%%%%%%%%%%%%%%%%%%%%%%%%%%%%%%%%%%%%%%%%%%%%%%%%%%%%%%%%%%%%%%%%%%%%%%%%%%%%%%%%%
%%%%%%%%%%%%%%%%%%%%%%%%%%%%   CONVERGENCIA  %%%%%%%%%%%%%%%%%%%%%%%%%%%%%%%%%%%%%%%%%%%%%%%%%%%%%%
%%%%%%%%%%%%%%%%%%%%%%%%%%%%%%%%%%%%%%%%%%%%%%%%%%%%%%%%%%%%%%%%%%%%%%%%%%%%%%%%%%%%%%%%%%%%%%%%%%%%%%%%%%%%%%%%%%%%%%%%%%%%%%%%%%%%%%%%%%%%%%%%%%%%%%%%%%%%%%%%%%%%%%%%%%%%%%%%%%%%%%%%%%%%%%%%

\section{Convergence in $\sigma$-content of row sequences}
\label{convergence}

Throughout the rest of the paper we maintain the notations
introduced above, i.e., $\Sigma$ is a compact set with connected
complement $\Omega$ and $f$ is an analytic function on a
neighborhood $V$ of $\Sigma$. For each nonnegative integers $n,m,\,
n\ge m$, $\Pi_{n,m}$ stands for the multipoint Pad\'e approximant of
type $(n,m)$ of $f$ with interpolation points given by $ w_{n+1}$.
Such points belong to $\Sigma$. Given a positive unit Borel measure
$\mu$ supported on $\Sigma$, recall that $\rho_\mu(K)=\|\exp\{
-P(\mu;\cdot)\}\|_K$ and $D_{\mu,m}$ is the set of $m$-meromorphy of
$f$ relative to $\mu $.

In this section we show that convergence in $\sigma$-content of the
approximants $\{\Pi_{n,m}\}$ is not possible outside $D_{\mu,m}$
provided we are not in the set of interpolation $\Sigma$. The
corresponding result for the classical case $\Sigma=\{0\}$ appears
in \cite{vavilov}.
\begin{thm}\label{divergencia}
Let the measure $\mu$ be the asymptotic zero distribution of the
sequence of interpolation points given by $
\{w_n\}_{n\in\mathbb{N}}$. Suppose that the sequence
$\{\Pi_{n,m}\}_{n\ge m}$ converges in $\sigma$-content on compact
subsets of a neighborhood of the point
$z_0\in\mathbb{C}\setminus\Sigma$. Then, $z_0\in D_{\mu,m}$.
\end{thm}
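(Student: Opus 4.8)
The plan is to argue by contradiction: assume $z_0 \notin D_{\mu,m}$, i.e.\ that $\exp\{-P(\mu;z_0)\} \geq R_{\mu,m}$, and derive a contradiction with the hypothesis that $\{\Pi_{n,m}\}$ converges in $\sigma$-content near $z_0$. The key is that convergence in $\sigma$-content together with Gonchar's Lemma (part i) forces the limit function $\varphi$ to be meromorphic with at most $m$ poles in the neighborhood of $z_0$ where convergence holds, and this $\varphi$ must agree with $f$ wherever $f$ is already known to be analytic (near $\Sigma$ via Theorem \ref{thwarner}/Lemma \ref{lema3}). So the real task is to patch $\varphi$ together with the meromorphic continuation of $f$ on $D_{\mu,m}$ to produce a meromorphic continuation of $f$ with at most $m$ poles on a strictly larger set of the form $E_\mu(r)$ with $r > R_{\mu,m}$, contradicting the definition of $R_{\mu,m}$.

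First I would fix a small radius $\delta>0$ so that the disk $B = B(z_0,\delta)$ satisfies $\overline{B} \cap \Sigma = \emptyset$ and $\{\Pi_{n,m}\}$ converges in $\sigma$-content on compact subsets of $B$; call the limit $\varphi$. By part i) of Gonchar's Lemma, $\varphi$ is (equal a.e.\ to) a meromorphic function on $B$ with at most $m$ poles there. Next, by Lemma \ref{lema3}, $\sigma\text{-}\lim_{n\to\infty}\Pi_{n,m} = f$ on $D_{\mu,m}$ (this uses that $R_{\mu,m} = R'_{\mu,m}$, established in the excerpt), so on the open set $B \cap D_{\mu,m}$ — if nonempty — the two $\sigma$-limits coincide, whence $\varphi = f$ there as meromorphic functions. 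If $B \cap D_{\mu,m} = \emptyset$, I would instead compare on a thin "collar": since $\exp\{-P(\mu;\cdot)\}$ is continuous off $\Sigma$ and $z_0$ sits on the fine boundary of $D_{\mu,m}$, one can choose $z_0$'s neighborhood so that it overlaps $D_{\mu,m}$; the harmonicity of $P(\mu;\cdot)$ off $\Sigma$ and the minimum principle (as used in Lemma \ref{lema}) guarantee the sublevel set $E_\mu(r)$ genuinely grows past $R_{\mu,m}$ near $z_0$. Either way, gluing $f|_{D_{\mu,m}}$ and $\varphi|_B$ along their common domain yields a single meromorphic function $F$ on $D_{\mu,m} \cup B$.

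Then I would bound the pole count of $F$ on $D_{\mu,m} \cup B$. On $D_{\mu,m}$ it has at most $m$ poles; on $B$ it has at most $m$; the issue is that the union could a priori have more. Here I would invoke part ii) of Gonchar's Lemma applied on $B$ — or more directly, the precise pole-tracking in Theorem \ref{thwarner}(i): the $m$ poles of $\Pi_{n,m}$ converge to the poles of $f$ in $D_{\mu,m}$, so any pole of $\varphi$ inside $B$ is a limit of poles of $\Pi_{n,m}$, hence must coincide with one of the (at most $m$) poles of $f$ already counted in $D_{\mu,m}$. This is the step I expect to be the main obstacle: controlling the location of the poles of the $\sigma$-limit and ruling out "new" poles appearing at the fine boundary of $D_{\mu,m}$, which requires combining the convergence of zeros of $Q_{n,m}$ with the normalization \eqref{normado} and the asymptotic distribution $\mu$. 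Once this is settled, $F$ is a meromorphic continuation of $f$ with at most $m$ poles on $D_{\mu,m} \cup B$.

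Finally, I would check that $D_{\mu,m} \cup B$ contains a set $E_\mu(r)$ with $r > R_{\mu,m}$. Since $z_0 \in \overline{D_{\mu,m}}\setminus D_{\mu,m}$ lies off $\Sigma$ and $P(\mu;\cdot)$ is harmonic and nonconstant near $z_0$, the sublevel sets $E_\mu(r)$ for $r$ slightly above $R_{\mu,m}$ differ from $E_\mu(R_{\mu,m}) = D_{\mu,m}$ only by a set clustering near the fine boundary, which near $z_0$ is absorbed into $B$ (shrinking $\delta$ if necessary and using that the fine and Euclidean boundaries of the connected open set $D_{\mu,m}$ coincide). Hence $f$ admits meromorphic continuation with at most $m$ poles on some $E_\mu(r)$, $r>R_{\mu,m}$, contradicting the maximality in the definition of $R_{\mu,m}$. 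Therefore $z_0 \in D_{\mu,m}$, as claimed.
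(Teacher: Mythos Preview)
Your overall contradiction set-up is the same as the paper's, but the argument breaks down at two places, and the second one is fatal to the whole approach.

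\textbf{Pole count on $D_{\mu,m}\cup B$.} You invoke Theorem~\ref{thwarner}(i) to say the poles of $\Pi_{n,m}$ converge to the poles of $f$ in $D_{\mu,m}$, so any pole of $\varphi$ in $B$ must already be counted among those of $f$. But Theorem~\ref{thwarner} assumes $f$ has \emph{exactly} $m$ poles in $D_{\mu,m}$. If $f$ has $\nu<m$ poles there, the remaining $m-\nu$ zeros of $Q_{n,m}$ are uncontrolled (they may wander or diverge), so you cannot locate the poles of the $\sigma$-limit $\varphi$ this way. Gonchar's Lemma~ii) has the same hypothesis and does not help either. Thus, in the generic case you cannot rule out new poles of $\varphi$ in $B\setminus D_{\mu,m}$, and the glued function $F$ may have more than $m$ poles.

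\textbf{The set $D_{\mu,m}\cup B$ does not contain any $E_\mu(r)$ with $r>R_{\mu,m}$.} This is the essential obstruction. Off $\Sigma$, the level set $\{\exp\{-P(\mu;z)\}=R_{\mu,m}\}$ is a level curve of a nonconstant harmonic function; it is a global (typically connected) curve, not a set clustering only near $z_0$. For any $r>R_{\mu,m}$ the increment $E_\mu(r)\setminus D_{\mu,m}$ is an open collar around the \emph{entire} level curve, and a single small disk $B$ near $z_0$ covers only a tiny arc of it. Hence $E_\mu(r)\not\subset D_{\mu,m}\cup B$ for every $r>R_{\mu,m}$, and no contradiction with the definition of $R_{\mu,m}$ follows. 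Your local gluing gives information only in one spot, whereas what is needed is global information along the whole boundary of $D_{\mu,m}$.

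This is exactly the difficulty the paper's proof is designed to overcome. Instead of trying to extend $f$ directly, the paper introduces the polynomials $s_n=Q_{n,m}Q_{n-1,m}(\Pi_{n,m}-\Pi_{n-1,m})\in\mathcal{P}_n$ and the subharmonic functions
\[
h_n(z)=\frac{1}{n}\log|s_n(z)|+P(\mu;z)+\log R_{\mu,m}.
\]
The $\sigma$-convergence near $z_0$ gives $h_n<\alpha<0$ on a small disk, while the Cauchy integral estimate of Lemma~\ref{lema3} gives $h_n\le\varepsilon$ on the boundary of a regular neighborhood $W$ of $\Sigma$. A harmonic majorant $\psi_\varepsilon$ (the solution of a Dirichlet problem on the complement of $\overline{W}$ and the disk) then propagates the strict negativity from the single disk to a full neighborhood of the level curve, via the two-constant theorem. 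From $h_n<\text{const}<0$ there one deduces that $\{\Pi_{n,m}\}$ converges in $\sigma$-content on that neighborhood, and Gonchar's Lemma~i) gives a meromorphic continuation with at most $m$ poles past $R_{\mu,m}$, the desired contradiction. The potential-theoretic step is what converts local convergence into global convergence along the boundary; your proposal has no mechanism to do this.
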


\begin{proof}
We argue by contradiction and divide the proof into three parts.
First, we reduce the result to finding a negative constant which
uniformly bounds from above a sequence of subharmonic functions
$\{h_n\}_{n\ge m}$. In the second part we study the boundary values
of $h_n, n\ge m,$ and obtain certain estimates for them which allow
us to define a harmonic majorant of all subharmonic functions
considered. This harmonic majorant has the desired properties and
with that we conclude the proof in part three.

1. Suppose that $z_0\not\in D_{\mu,m}$. As
$z_0\in\mathbb{C}\setminus\Sigma$ and $P(\mu;z)$ is a harmonic
function in that region, there exist
$z_1\in\mathbb{C}\setminus\Sigma$ and $\delta>0$ such that
$D(z_1,\delta)=\{z\in\mathbb{C}\,:\,|z-z_1|<\delta\}\subset
\mathbb{C}\setminus\Sigma$, $\exp\{-P(\mu;z)\}>R_{\mu,m}$ for all
$z\in \overline{D(z_1,\delta)}$, and the sequence
$\{\Pi_{n,m}\}_{n\ge m}$ converges in $\sigma$-content on compact
subsets of a neighborhood of $\overline{D(z_1,\delta)}$. Due to this
latter fact, for all $n\ge N_1$, it holds
$$
\sigma\left\{z\in\overline{D(z_1,\delta)}\,:\,\left|
\Pi_{n,m}(z)-\Pi_{n-1,m}(z)\right|\ge 1\right\}<\frac{\delta}{3}.
$$
Hence, there exists $\delta_1\in (\delta/3,\delta)$ such that
\begin{equation}\label{disco}
|\Pi_{n,m}(z)-\Pi_{n-1,m}(z)|<1,\quad |z-z_1|=\delta_1,\quad n\ge
N_1,
\end{equation}
where we have used the facts that $\sigma$-content is not increased
under circular projection and that the $\sigma$-content of a line
segment is half of its length.

As $z_0\not\in D_{\mu,m}$, we know that $R_{\mu, m}<+\infty$. Recall
that $V$ is the neighborhood of $\Sigma$ in which $f$ is analytic.
Set
$$
s_n\equiv
Q_{n,m}\,Q_{n-1,m}\,(\Pi_{n,m}-\Pi_{n-1,m})\in\mathcal{P}_{n},\quad
n\ge m.
$$
To reach a contradiction it is enough to prove that the functions
$$
h_n(z)=\frac{1}{n}\log |s_n(z)|+P(\mu;z)+\log R_{\mu,m},\quad n\ge
m,
$$
are uniformly bounded by a negative constant on a neighborhood $U$
of the set
$$
\{z\in\mathbb{C}\setminus V\,:\, \exp
\{-P(\mu;z)\}=R_{\mu,m}\}\not=\emptyset.
$$
In that case, there exist constants $q<1$  and $C_3>0$ verifying
\begin{equation}\label{esen}
 |s_n(z)|<C_3q^n,\quad n\ge N,\quad z\in U.
\end{equation}
Fix $\varepsilon>0$ and $K$ compact subset of $U$. Then, because of
\eqref{desig} and \eqref{esen}, we have
$$
\left|\Pi_{n,m}(z)-\Pi_{n-1,m}(z) \right|
=\left|\frac{s_n(z)}{Q_{n,m}(z)\,Q_{n-1,m}(z)}\right| \le
\frac{C_3}{C_2^2}\,q^n\,n^{4m},
$$
for $n\ge N$ and for all $z\in K(\varepsilon)$. As
$\sum_{n=1}^\infty q^n\,n^{4m}<+\infty$, the sequence
$\{\Pi_{n,m}\}_{n\ge m}$ converges uniformly on $K(\varepsilon)$.
Since $\varepsilon>0$ and $K$ are arbitrary, part i) of Gonchar's
lemma proves that the sequence $\{\Pi_{n,m}\}_{n\ge m}$ converges in
$\sigma$-content
 on compact subsets of $U$ to a meromorphic continuation of $f$ with at most $m$ poles, which
contradicts the definition of $R_{\mu,m}$.

2. As the functions $s_n$ are analytic, from the maximum principle
in $D(z_1,\delta_1)$, \eqref{desig},  and \eqref{disco}, it follows
that there exists a constant $C_4>0$ such that
$$
|s_n(z)|<C_4, \quad z\in \overline{D(z_1,\delta_1)}, \quad n\ge N_1.
$$
We also have
$$
P(\mu;z)+\log R_{\mu,m}<m<0,\quad z\in \overline{D(z_1,\delta_1)}.
$$
Then, there exists a negative constant $\alpha$ such that
\begin{equation}\label{negativo}
h_n(z)<\alpha<0,\quad z\in \overline{D(z_1,\delta_1)}, \quad n\ge
N_2\ge N_1.
\end{equation}

On the other hand, let $Q_m(z)=\prod_{i=1}^\nu (z-z_i)$, where
$z_i,\,i=1,\dots,\nu\le m,$ are the poles of $f$ in $D_{\mu,m}$.
 Let $W$ be a bounded open set with connected complement such that
$\overline{W}$ is regular,  $\Sigma\subset
W\subset\overline{W}\subset V,$ and  $\overline{W}\cap
\overline{D(z_1,\delta)}=\emptyset$. The existence of $W$ follows
from the Hilbert lemniscate theorem (see Theorem 5.5.8 in
\cite{ransford}). Consequently, $\partial W$ does not contain any
pole of the function $f$ in $D_{\mu,m}$ and the set
$D=\overline{\mathbb{C}}\setminus (\overline{W}\cup
\overline{D(z_1,\delta_1)})$ is connected.

Let $\eta>0$ be arbitrarily small. Let $\Gamma$ be a cycle contained
in
$$
\left\{D_{\mu,m}\cup V \right\}\setminus \left\{
\overline{E_\mu(R_{\mu,m}-\eta)}\cup \overline{W}\right\},
$$
homologous to $0$ in $D_{\mu,m}\cup V$ with winding number equal to
$1$ for all the poles $z_i,\,i=1,\dots,\nu,$ as well as the points
in $\overline{W}$. This can be done because the compact set $
\overline{E_\mu(R_{\mu,m}-\eta)}\cup \overline{W} $ is included in
the open set $D_{\mu,m}\cup V$. Now, we follow the arguments given
in the proof of Lemma \ref{lema3}. For all $z\in\Gamma$ it holds
\eqref{acot} and we can apply the Cauchy integral formula and the
principle of descent \eqref{descenso2} in an analogous way to obtain
$$
\limsup_{n\to\infty} |Q_{n,m}(z)\,(f(z)-\Pi_{n,m}(z))|^{1/n}\le
\frac{e^{- P(\mu;z)}}{R_{\mu,m}},
$$
uniformly on $\partial W$ since $\partial W\cap\Sigma=\emptyset$.
Then, using \eqref{desig}  we have
\begin{equation}\label{cota0}
\limsup_{n\to\infty} |s_n(z)|^{1/n}\le \frac{e^{-
P(\mu;z)}}{R_{\mu,m}},
\end{equation}
uniformly on $\partial W$. Therefore, for any given $\varepsilon>0$
there exists $N_{\varepsilon}\in\mathbb{N}, N_{\varepsilon}\ge N_2$,
such that
\begin{equation}\label{cota}
h_n(z)\le\varepsilon,\quad z\in\partial W,\quad n\ge
N_{\varepsilon}.
\end{equation}

3. Now, for each $\varepsilon>0$, a harmonic function
$\psi_\varepsilon$ is constructed as the solution on $
D=\overline{\mathbb{C}}\setminus
(\overline{W}\cup \overline{D(z_1,\delta_1)})
$ of the
Dirichlet problem given by the boundary values
$$
\psi_\varepsilon(z)=
  \begin{cases}
    \varepsilon, & \text{for}\,\, z\in \partial W, \\
    \alpha, & \text{for}\,\, z\in\partial D(z_1,\delta_1).
  \end{cases}
$$
Since the functions $h_n,\, n\in\mathbb{N},$ are subharmonic on $D$
and taking account of (\ref{negativo}) and (\ref{cota}), we see that
$\psi_\varepsilon$ is a harmonic majorant of $h_n$ for all $n\ge
N_\varepsilon$. Fix any compact set $K\subset D$. From the
two-constant theorem (see \cite{ransford}, Theorem $4.3.7$) it
follows that there exist positive constants $m_K$ and $M_K$
depending only on $K$ such that
$$
\psi_\varepsilon(z)\le\varepsilon\, m_K+\alpha\, M_K ,\quad z\in
K,\quad \varepsilon>0.
$$
Thus, for $\varepsilon>0$ sufficiently small, the function
$\psi_\varepsilon$ is bounded from above on $K$ by a negative
constant. The same property is then satisfied by the functions
$h_n,\, n\ge N_\varepsilon$. In particular, we can take $K$ in such
a way that it contains the open set $U$ of part 1, which finishes
the proof.
\end{proof}

%\begin{rem}\label{obs0}From Theorem \ref{divergencia} and Osgood's theorem
%(see Section 7.1 in \cite{remmert}) it follows that if the sequence
%$\{\Pi_{n,m}\}_{n\ge m}$ converges pointwise on a neighborhood of
%the point $z_0\in\mathbb{C}\setminus\Sigma$, then $z_0\in
%D_{\mu,m}$.
%\end{rem}

%\begin{rem}\label{obs1} Although in principle Theorem \ref{divergencia} does not give
%information about what happens on $\Sigma$ outside
%$\overline{D}_{\mu,m}$, notice that convergence in $\sigma$-content
%is not possible either in a neighborhood of
%$z_0\in\partial\Sigma\setminus \overline{D}_{\mu,m}$ since it would
%imply convergence in $\sigma$-content in a disk
%$D\subset\mathbb{C}\setminus\left(\overline{D}_{\mu,m}\cup\Sigma\right)$.
%\end{rem}

\begin{rem}\label{obs2}Let $\Lambda=\{n_k\}\subset\mathbb{N}$ be a
subsequence such that
\begin{equation}\label{sub}
\lim_{k\to\infty}\frac{n_{k+1}}{n_k}=1
\end{equation}
and suppose that $\{\Pi_{n,m}\}_{n\in\Lambda}$ converges in
$\sigma$-content on compact subsets of a neighborhood of the point
$z_0\in\mathbb{C}\setminus\Sigma$. Then, we can deduce that $z_0\in
D_{\mu,m}$ since all the arguments employed in the proof of Theorem
\ref{divergencia} are valid without any modification except to
obtain \eqref{cota0} where \eqref{sub} is strongly used.
\end{rem}

 We say that the interpolation points
$\alpha_{n,i},\, i=1,\dots,n,\,n\in\mathbb{N}$, (see
\eqref{interpolacion}) form a newtonian table if $w_n$ is divisor of
$w_{n+1}$ for all $n\in\mathbb{N}$, that is, if
$\alpha_{n,i}=\alpha_i,i=1,\dots,n$, $n\in\mathbb{N}$.

In the case that  $\Sigma=\{0\}$ pointwise divergence of the Pad\'e
approximants outside $\overline{D}_{m}$ has been proved  by Gonchar
in \cite{gonrows}. For multi-point Pad\'e approximants, this was
obtained by H. Wallin \cite{wallin} (excluding the set of
interpolation $\Sigma$) provided that the interpolation points form
a newtonian table and that the function $f$ has exactly $m$ poles in
$D_{\mu,m}$. This latter condition plays an essential role since it
assures the convergence of the denominators of the approximants
which is strongly used in \cite{wallin}.

We next prove that the sequence $\{\Pi_{n,m}\}_{n\ge m}$ diverges
pointwise outside the set $\overline{D}_{\mu,m}\cup \Sigma$ under the
only assumption that the interpolation points form a newtonian
table. To the best of our knowledge, proving such a result for
general tables of interpolation is an open problem, even for $m=0$.
In the particular case that the function $f$ is meromorphic with
simple poles, an easy argument based on the descent principle and
the residue theorem gives the desired divergence (cf.
\cite{wallin2}).

\begin{propo}\label{teorema2}
Let the measure $\mu$ be the asymptotic zero distribution of the
sequence of interpolation points given by $\{w_n\}_{n\in\mathbb{N}}$
which form a newtonian table. Then, the sequence
$\{\Pi_{n,m}\}_{n\ge m}$ diverges pointwise in $\mathbb{C}\setminus
\left(\overline{D}_{\mu,m}\cup \Sigma\right)$.
\end{propo}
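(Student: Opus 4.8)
The plan is to exploit the fact that, for a \emph{newtonian} table, the consecutive differences $\Pi_{n,m}-\Pi_{n-1,m}$ have a very rigid form. Pick polynomials $\widehat P_n,\widehat Q_n$ with $\deg\widehat P_n\le n-m$, $\deg\widehat Q_n\le m$, $\widehat Q_n\not\equiv 0$ and $(\widehat Q_n f-\widehat P_n)/w_{n+1}\in\mathcal H(V)$, so that $\Pi_{n,m}=\widehat P_n/\widehat Q_n$. Writing $\widehat P_n=\widehat Q_n f-w_{n+1}A_n$ and $\widehat P_{n-1}=\widehat Q_{n-1}f-w_n B_n$ with $A_n,B_n\in\mathcal H(V)$, one gets
\[
\widehat P_n\widehat Q_{n-1}-\widehat P_{n-1}\widehat Q_n=w_n\Bigl(B_n\widehat Q_n-\frac{w_{n+1}}{w_n}\,A_n\widehat Q_{n-1}\Bigr),
\]
and here $w_{n+1}/w_n$ is a polynomial \emph{precisely because the table is newtonian}. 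Hence $w_n$ divides $\widehat P_n\widehat Q_{n-1}-\widehat P_{n-1}\widehat Q_n$ in $\mathbb C[z]$, and, since this polynomial has degree at most $n=\deg w_n$, the quotient is a constant. Dividing out the common factors in the definition of $\Pi_{n,m}$ (which, as one checks, must divide $w_n$), this yields, for the reduced normalized $P_{n,m},Q_{n,m}$,
\begin{equation}\label{rigid}
\Pi_{n,m}-\Pi_{n-1,m}=\frac{c_n\,\varpi_n}{Q_{n,m}\,Q_{n-1,m}},\qquad n\ge m,
\end{equation}
where $c_n\in\mathbb C$ and $\varpi_n$ is a monic divisor of $w_n$, with all zeros in $\Sigma$ and $\deg\varpi_n\ge n-2m$. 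In particular $\varpi_n$ still has $\mu$ as asymptotic zero distribution, so $|\varpi_n(z)|^{1/n}\to\exp\{-P(\mu;z)\}$ uniformly on compact subsets of $\mathbb C\setminus\Sigma$, by the same argument that gives \eqref{descenso2}.

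The core of the proof is to determine the exact exponential rate of $\{c_n\}$, namely $\limsup_{n\to\infty}|c_n|^{1/n}=1/R_{\mu,m}$. For the inequality $\le$ I would argue as in the derivation of \eqref{cota0} in the proof of Theorem \ref{divergencia}: for a regular neighborhood $W$ of $\Sigma$ with $\overline W\subset V$ one has $\limsup_n\|Q_{n,m}Q_{n-1,m}(\Pi_{n,m}-\Pi_{n-1,m})\|_{\partial W}^{1/n}\le\|\exp\{-P(\mu;\cdot)\}\|_{\partial W}/R_{\mu,m}$, and since $\partial W\cap\Sigma=\emptyset$, dividing by $\|\varpi_n\|_{\partial W}^{1/n}\to\|\exp\{-P(\mu;\cdot)\}\|_{\partial W}$ gives $\limsup|c_n|^{1/n}\le 1/R_{\mu,m}$. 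For the reverse inequality I would argue by contradiction: if $\limsup|c_n|^{1/n}=1/\widetilde R$ with $\widetilde R>R_{\mu,m}$, then for any compact $K\subset E_\mu(\widetilde R)$ and any $\varepsilon>0$, \eqref{rigid}, the lower bounds for $Q_{n,m}$ and $Q_{n-1,m}$ in \eqref{desig}, and the growth of $\varpi_n$ give
\[
\limsup_{n\to\infty}\|\Pi_{n,m}-\Pi_{n-1,m}\|_{K(\varepsilon)}^{1/n}\le\Bigl(\limsup_n|c_n|^{1/n}\Bigr)\rho_\mu(K)\le\frac{\rho_\mu(K)}{\widetilde R}<1 ,
\]
so $\sum_n\|\Pi_{n,m}-\Pi_{n-1,m}\|_{K(\varepsilon)}<\infty$ and $\{\Pi_{n,m}\}$ converges uniformly on $K(\varepsilon)$; since $K$ and $\varepsilon$ are arbitrary, $\{\Pi_{n,m}\}$ converges in $\sigma$-content on compact subsets of $E_\mu(\widetilde R)$, contradicting $\widetilde R>R_{\mu,m}=R^\prime_{\mu,m}$.

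Granting this, fix $z_0\in\mathbb C\setminus(\overline{D}_{\mu,m}\cup\Sigma)$. Since $z_0\notin\Sigma$ and $P(\mu;\cdot)$ is harmonic and nonconstant on the connected set $\mathbb C\setminus\Sigma$, the relation $z_0\notin\overline{E_\mu(R_{\mu,m})}$ forces $\exp\{-P(\mu;z_0)\}>R_{\mu,m}$, exactly as in Lemma \ref{lema}. Choose a subsequence $\{n_k\}$ with $|c_{n_k}|^{1/n_k}\to 1/R_{\mu,m}$. Using \eqref{rigid}, the trivial bound $|Q_{n_k,m}(z_0)Q_{n_k-1,m}(z_0)|\le C_1^2$ from \eqref{desig}, and \eqref{descenso2},
\[
|\Pi_{n_k,m}(z_0)-\Pi_{n_k-1,m}(z_0)|^{1/n_k}\ \ge\ \frac{|c_{n_k}|^{1/n_k}\,|\varpi_{n_k}(z_0)|^{1/n_k}}{C_1^{2/n_k}}\ \longrightarrow\ \frac{\exp\{-P(\mu;z_0)\}}{R_{\mu,m}}\ >\ 1 ,
\]
so $\Pi_{n_k,m}(z_0)-\Pi_{n_k-1,m}(z_0)\to\infty$; in particular $\{\Pi_{n,m}(z_0)\}$ cannot converge to a finite limit. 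Under the usual reading of ``diverges'' as ``fails to converge to a finite value'', this finishes the proof.

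The step I expect to be the main obstacle is strengthening this to divergence on the Riemann sphere, i.e.\ also excluding $\Pi_{n,m}(z_0)\to\infty$, since the estimate above only yields $\max\{|\Pi_{n_k,m}(z_0)|,|\Pi_{n_k-1,m}(z_0)|\}\to\infty$, which a priori permits both terms to tend to $\infty$. The natural line of attack is to pass to the reciprocals $Q_{n,m}/P_{n,m}$ near $z_0$ and compare the growth that $1/\Pi_{n,m}(z_0)-1/\Pi_{n-1,m}(z_0)\to 0$ together with \eqref{rigid} forces on $|P_{n_k,m}(z_0)P_{n_k-1,m}(z_0)|$ (which would have to outgrow $|c_{n_k}\varpi_{n_k}(z_0)|\sim(\exp\{-P(\mu;z_0)\}/R_{\mu,m})^{n_k}$) with a priori control on the size and the number of zeros of $P_{n,m}$ near $z_0$, obtained from the Newton form of $P_{n,m}$ and the uniform bound for $Q_{n,m}$ in \eqref{desig}. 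This is the only point where appreciable work beyond the rigidity relation \eqref{rigid} seems to be needed, and it again rests crucially on the table being newtonian.
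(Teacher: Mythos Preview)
Your approach matches the paper's: derive the rigidity relation for consecutive differences from the newtonian hypothesis, identify $\limsup|c_n|^{1/n}=1/R_{\mu,m}$, and conclude divergence at $z_0$ from the fact that the general term of the telescoping series does not tend to $0$. Two small points and one reassurance:

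In your argument for $\limsup|c_n|^{1/n}\le 1/R_{\mu,m}$, ``dividing by $\|\varpi_n\|_{\partial W}^{1/n}$'' is a slip: to bound $|c_n|=|s_n(z)|/|\varpi_n(z)|$ from above you must divide by the \emph{minimum} of $|\varpi_n|$ on $\partial W$, or simply work at a fixed $z\in\partial W$, where the pointwise estimate \eqref{cota0} and $|\varpi_n(z)|^{1/n}\to e^{-P(\mu;z)}$ make the potential factors cancel. The paper takes a slightly different route for this inequality: it first proves divergence at any $z\notin\Sigma$ with $\rho_\mu(\{z\})>R^*:=1/\limsup|A_n|^{1/n}$, and then observes that $R^*<R_{\mu,m}$ would place such a point inside $D_{\mu,m}\setminus\Sigma$, contradicting Lemma~\ref{lema3}. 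Also, the paper asserts directly that the reduced pair $(P_{n,m},Q_{n,m})$ still satisfies $(Q_{n,m}f-P_{n,m})/w_{n+1}\in\mathcal H(V)$, so it obtains exactly $A_nw_{n+1}$ rather than $c_n\varpi_n$; your more cautious version with a divisor $\varpi_n$ of degree $\ge n-2m$ is equally sufficient for the asymptotics.

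Your last paragraph is unnecessary: in the paper ``diverges'' means ``fails to converge in $\mathbb C$'', and the proof stops exactly where yours does, at $\limsup_n|\Pi_{n+1,m}(z_0)-\Pi_{n,m}(z_0)|^{1/n}>1$. No effort is made to rule out $\Pi_{n,m}(z_0)\to\infty$, so the obstacle you anticipate does not arise for this proposition.
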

\begin{proof} We may assume that  $R_{\mu, m}<+\infty$. Otherwise,
there is nothing left to prove. From the definition of multi-point Pad\'e
approximants and the fact that the table of point is newtonian it
follows that
$$
\frac{Q_{n,m}f-P_{n,m}}{w_{n+1}}\in \mathcal{H}(V),\quad\quad
\frac{Q_{n+1,m}f-P_{n+1,m}}{w_{n+1}}\in \mathcal{H}(V).
$$
Therefore
\begin{equation}\label{analitica}
\frac{Q_{n,m}P_{n+1,m}-Q_{n+1,m}P_{n,m}}{w_{n+1}}\in \mathcal{H}(V).
\end{equation}
The numerator of the fraction in \eqref{analitica} is a polynomial
of degree at most $n+1$ with zeros at the zeros of $w_{n+1}$.
Necessarily
$$
Q_{n,m}(z)P_{n+1,m}(z)-Q_{n+1,m}(z)P_{n,m}(z)=A_n w_{n+1}(z),
$$
or, equivalently
\begin{equation}\label{analitica2}
\Pi_{n+1,m}(z)-\Pi_{n,m}(z)=\frac{A_n
w_{n+1}(z)}{Q_{n,m}(z)Q_{n+1,m}(z)}.
\end{equation}
 Considering telescopic sums, it follows that the
sequence $\{\Pi_{n,m}\}_{n \geq m}$ converges or diverges with the
series
\[ \sum_{n \geq n_0} \frac{A_n
w_{n+1}(z)}{Q_{n,m}(z)Q_{n+1,m}(z)},
\]
where $n_0$ is chosen conveniently so that $Q_{n_0,m}(z) \neq 0$ at
the specific point under consideration. Set
$$
\frac{1}{R^*}=\limsup_{n\to\infty}|A_n|^{1/n}.
$$
Fix $\varepsilon>0$ and an arbitrary compact set $K\subset
E_{\mu}(R^*)$. By virtue of \eqref{desig} and \eqref{descenso1}, we
obtain
$$
\limsup_{n\to\infty}\left|\frac{A_n
w_{n+1}(z)}{Q_{n,m}(z)Q_{n+1,m}(z)}\right|^{1/n}\le\frac{\rho_\mu(K)}{R^*}<1,\quad
z\in K(\varepsilon),
$$
which implies that the sequence $\{\Pi_{n,m}\}_{n \geq m}$ converges
uniformly on $K(\varepsilon)$. Then  $\{\Pi_{n,m}\}_{n \geq m}$
converges in $\sigma$-content on compact subsets of $E_{\mu}(R^*)$
and, consequently, $R^*\le R_{\mu,m}$.

Let us consider $z_0\not\in \Sigma$ such that
$\rho_\mu(\{z_0\})>R^*$. Using now \eqref{desig} and
\eqref{descenso2} we have
\begin{equation}\label{analitica3}
\limsup_{n\to\infty}\left|\frac{A_n
w_{n+1}(z_0)}{Q_{n,m}(z_0)Q_{n+1,m}(z_0)}\right|^{1/n}\ge\frac{\rho_\mu(\{z_0\})}{R^*}>1.
\end{equation}
Hence, the sequence $\{\Pi_{n,m}(z)\}_{n \geq m}$ diverges at
$z=z_0$.

It remains to prove that $R^*\ge R_{\mu,m}$. Suppose that $R^*<
R_{\mu,m}$. Then, there exists $z_1\in D_{\mu,m}\setminus \Sigma$
with $\rho_\mu(\{z_1\})>R^*$. By the continuity of the potential
$P(\mu;\cdot)$ in a neighborhood of $z_1$, there exists a compact
disk $K_1=\overline{D(z_1,\delta)}\subset D_{\mu,m}\setminus \Sigma$
verifying $\rho_\mu(\{z\})>R^*$ for all $z\in K_1$.

Fix $\varepsilon>0$ with $\varepsilon<\delta$ so that
$K_1(\varepsilon)\not= \emptyset$. From \eqref{contenido} and
\eqref{analitica2} it follows
$$
\limsup_{n\to\infty}\left|\frac{A_n
w_{n+1}(z)}{Q_{n,m}(z)Q_{n+1,m}(z)}\right|^{1/n}\le\frac{\rho_\mu(K)}{R_{\mu,m}}<1,\quad
z\in K_1(\varepsilon),
$$
which is absurd in light of  \eqref{analitica3}.
\end{proof}

\begin{rem}
Notice that the proof of Proposition \ref{teorema2} does not make
use of the fact that the set $\Sigma$ is simply connected.
Therefore, that condition may be dropped in this case. Instead, we
must require the function $f$ to be analytic in $E_\mu(r)$ for some
$r>r_0$ since now we cannot apply Lemma \ref{lema}. The same can be
said about the other results concerning newtonian tables of
interpolation, that is, Proposition \ref{teorema1b} and Corollary
\ref{exactrate2} below.
\end{rem}
%%%%%%%%%%%%%%%%%%%%%%%%%%%%%%%%%%%%%%%%%%%%%%%%%%%%%%%%%%%%%%%%%%%%%%%%%%%%%%%%%%%%%%%%%%%%%%%
%%%%%%%%%%%%%%%%%%%%%%%%%%%%%%%%%%%%%%%%%%%%%%%%%%%%%%%%%%%%%%%%%%%%%%%%%%%%%%%%%%%%%%%%%%%%%
%%%%%%%%%%%%%%%%%%%%%%%%%%%%  RAZON DE CONVERGENCIA  %%%%%%%%%%%%%%%%%%%%%%%%%%%%%%%%%%%%%%%%%%%%%%%%%%%%%%
%%%%%%%%%%%%%%%%%%%%%%%%%%%%%%%%%%%%%%%%%%%%%%%%%%%%%%%%%%%%%%%%%%%%%%%%%%%%%%%%%%%%%%%%%%%%%%%%%%%%%%%%%%%%%%%%%%%%%%%%%%%%%%%%%%%%%%%%%%%%%%%%%%%%%%%%%%%%%%%%%%%%%%%%%%%%%%%%%%%%%%%%%%%%%%%%

\section{Meromorphic continuation}\label{exact}

We are ready for our main result.

\begin{thm}\label{teorema1}
Let the measure $\mu$ be the asymptotic zero distribution of the
sequence of interpolation points given by
$\{w_n\}_{n\in\mathbb{N}}$. Let $K$ be a regular compact set for
which the value $\rho_\mu(K)$ is attained at a point that does not
belong to the interior of $\Sigma$. Suppose that the function $f$ is
defined on $K$ and fulfills
\begin{equation}\label{hypo}
\limsup_{n\to\infty}\|f-\Pi_{n,m}\|^{1/n}_K\le\frac{\rho_\mu(K)}{R}<1.
\end{equation}
Then, $R_{\mu,m}\ge R$, that is, $f$ admits meromorphic continuation
with at most $m$ poles on the  set $E_\mu(R)$.
\end{thm}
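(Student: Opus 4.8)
The plan is to argue by contradiction, adapting to the present situation the scheme of the proof of Theorem \ref{divergencia} (which itself goes back to Gonchar's proof of Theorem A). Suppose $R_{\mu,m}<R$; in particular $R_{\mu,m}<+\infty$. As in that proof, introduce the difference polynomials
\[
s_n:=Q_{n,m}\,Q_{n-1,m}\,(\Pi_{n,m}-\Pi_{n-1,m})=Q_{n-1,m}P_{n,m}-Q_{n,m}P_{n-1,m}\in\mathcal{P}_{n},\qquad n\ge m .
\]
By \eqref{desig}, $|Q_{n,m}(z)Q_{n-1,m}(z)|>C_2^{2}\,n^{-4m}$ on $L(\varepsilon')$ for every compact $L$ and every $\varepsilon'>0$. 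Hence it is enough to produce, on each compact subset $L$ of a suitable domain $D$, a bound $|s_n(z)|\le\bigl(q_L\,e^{-P(\mu;z)}\bigr)^{n}$ for all large $n$ with some $q_L<1/R_{\mu,m}$: indeed, the telescopic series $\sum_n(\Pi_{n,m}-\Pi_{n-1,m})=\sum_n s_n/(Q_{n,m}Q_{n-1,m})$ then converges uniformly on $L(\varepsilon')$ for every $\varepsilon'>0$ on the part of $L$ lying in $E_\mu(1/q_L)$, so $\{\Pi_{n,m}\}$ converges in $\sigma$-content on an open subset of $E_\mu(1/q_L)$; since $1/q_L>R_{\mu,m}$, letting $L$ exhaust $D$ and using Lemma \ref{lema3} (which already gives $\sigma$-content convergence on $D_{\mu,m}$) one covers a sublevel set $E_\mu(r)$ with $r>R_{\mu,m}$, and part i) of Gonchar's lemma then provides a meromorphic continuation of $f$ with at most $m$ poles on $E_\mu(r)$, contradicting the definition of $R_{\mu,m}$. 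Everything is thus reduced to estimating $|s_n|$.

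Fix, by the hypothesis on $K$, a point $z^{*}\in K$ with $\exp\{-P(\mu;z^{*})\}=\rho_\mu(K)$ and $z^{*}\notin\mathrm{int}\,\Sigma$; we may assume $z^{*}\notin\Sigma$ (if $z^{*}\in\partial\Sigma$, approximate it from $\mathbb{C}\setminus\Sigma$, using that $K$ is regular, so the Bernstein--Walsh inequality $|s_n(z)|\le\|s_n\|_K\exp\{n\,g_G(z,\infty)\}$ holds --- $g_G(\cdot,\infty)$ being the Green function of the unbounded component of $\overline{\mathbb{C}}\setminus K$ --- together with the upper semicontinuity of $\exp\{-P(\mu;\cdot)\}$). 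By the Hilbert lemniscate theorem choose a bounded open $W$ with connected complement, $\overline{W}$ regular, $\Sigma\subset W\subset\overline{W}\subset V$, with $\overline{W}$ free of poles of $f$ in $D_{\mu,m}$ and $z^{*}\notin\overline{W}$. Then, exactly as in part 2 of the proof of Theorem \ref{divergencia} (the Cauchy integral argument of Lemma \ref{lema3} carried out on $\partial W$, with a cycle $\Gamma\subset\{D_{\mu,m}\cup V\}\setminus\{\overline{E_\mu(R_{\mu,m}-\eta)}\cup\overline{W}\}$ of winding number one around $\overline{W}$ and around the poles of $f$ in $D_{\mu,m}$, together with \eqref{descenso2} and \eqref{desig}), one gets $\limsup_{n}|s_n(z)|^{1/n}\le e^{-P(\mu;z)}/R_{\mu,m}$ uniformly on $\partial W$; that is, writing $h_n(z)=\frac1n\log|s_n(z)|+P(\mu;z)+\log R_{\mu,m}$, one has $h_n\le\varepsilon$ on $\partial W$ for $n\ge N_\varepsilon$.

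For the complementary estimate, \eqref{hypo} and $\|Q_{n,m}\|_K<C_1$ give $\limsup_{n}\|s_n\|_K^{1/n}\le\rho_\mu(K)/R$. Since $g_G(z^{*},\infty)=0$ and $g_G(\cdot,\infty)$ and $P(\mu;\cdot)$ are continuous at $z^{*}$, the Bernstein--Walsh inequality shows that on a sufficiently small closed disk $\overline{D(z^{*},\delta)}\subset(\mathbb{C}\setminus\Sigma)\setminus\overline{W}$ one has $h_n\le\alpha$ for all large $n$, with $\alpha$ an arbitrarily prescribed number in $(\log(R_{\mu,m}/R),0)$. Now, as in part 3 of the proof of Theorem \ref{divergencia}, the functions $h_n$ are subharmonic on the connected domain $D=\overline{\mathbb{C}}\setminus(\overline{W}\cup\overline{D(z^{*},\delta)})$ --- on which $P(\mu;\cdot)$ is harmonic, because $D\cap\Sigma=\emptyset$, and where $h_n$ is subharmonic even at $\infty$ since $\deg s_n\le n$ --- and on $\partial D$ they are dominated by the boundary data ($\varepsilon$ on $\partial W$, $\alpha$ on $\partial D(z^{*},\delta)$). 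By the two-constants theorem, $h_n$ is bounded on every compact $L\subset D$ by $\varepsilon\,m_L+\alpha\,M_L$, with $m_L,M_L>0$ depending only on $L$; choosing $\varepsilon$ small makes this a negative constant $-\beta_L$, so that $|s_n(z)|\le\bigl(e^{-\beta_L}e^{-P(\mu;z)}/R_{\mu,m}\bigr)^{n}$ on $L$ for all large $n$, i.e.\ the bound of the first paragraph with $q_L=e^{-\beta_L}/R_{\mu,m}<1/R_{\mu,m}$. Letting $L$ exhaust $D$ then yields the contradiction as explained there.

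I expect two sources of difficulty. The essential one, exactly as in Theorem A and in Theorem \ref{divergencia}, is that the poles of $\Pi_{n,m}$ --- the zeros of $Q_{n,m}$ --- cannot be localized or bounded away from zero uniformly in $n$; this forces the argument to run in $\sigma$-content, relying on \eqref{desig} and Gonchar's lemma rather than on uniform convergence, and is why the statement concerns the region $E_\mu(R)$ in its entirety. The second, more technical, difficulty is the bookkeeping with the exceptional regions: one must check that, for $r$ only slightly above $R_{\mu,m}$, the annular set $E_\mu(r)\setminus D_{\mu,m}$ can be kept disjoint from $\overline{W}$ and from $\overline{D(z^{*},\delta)}$ --- which is exactly where the smallness of $W$ and the condition $z^{*}\notin\mathrm{int}\,\Sigma$ are used --- and that the various partial $\sigma$-content convergence statements (from Lemma \ref{lema3}, from the two-constants estimate on each $L$, and from the exhaustion of $D$) combine over one full sublevel set $E_\mu(r)$.
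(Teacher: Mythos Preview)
Your overall scheme coincides with the paper's: argue by contradiction, introduce $s_n$ and $h_n$, obtain $h_n\le\varepsilon$ on $\partial W$ by the Cauchy--integral argument of Lemma~\ref{lema3}, obtain $h_n<\alpha<0$ on a small disk in $\Omega$ near the extremal point of $K$ via Bernstein--Walsh, and conclude with the two--constants theorem on the doubly--connected domain $D$. So the strategy is right.

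There is, however, a genuine gap in your handling of the case $z^{*}\in\partial\Sigma$. To push $h_n$ below a negative constant on a disk in $\Omega$ you need a point $z_1\in\Omega$ close to $z^{*}$ with $P(\mu;z_1)$ close to $P(\mu;z^{*})=-\log\rho_\mu(K)$ \emph{from above}. Upper semicontinuity of $\exp\{-P(\mu;\cdot)\}$ (equivalently, lower semicontinuity of $P(\mu;\cdot)$) gives only $\liminf_{z\to z^{*}}P(\mu;z)\ge P(\mu;z^{*})$, which is the wrong direction: it allows $P(\mu;z)$ to be arbitrarily large near $z^{*}$, and then the term $P(\mu;z)+\log R_{\mu,m}$ in $h_n$ can swamp the gain $\log(R_{\mu,m}/R)$. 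This is precisely why the paper invokes the \emph{fine topology}: since $\Omega$ is a domain, its Euclidean and fine boundaries coincide, so $z^{*}\in\partial\Sigma=\partial\Omega$ is a fine limit point of $\Omega$; as $P(\mu;\cdot)$ is superharmonic and hence fine--continuous, one can then find $z_1\in\Omega$ with $P(\mu;z_1)<P(\mu;z^{*})+\tfrac18\log(R/R_{\mu,m})$. That is the actual role of the hypothesis ``$\rho_\mu(K)$ is attained outside $\mathrm{int}\,\Sigma$'', not the bookkeeping you allude to at the end.

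For the bookkeeping itself, the paper makes explicit what you leave implicit: from \eqref{hypo} and Corollary~\ref{teoremaa} one first deduces $\sigma$--content convergence of $\{\Pi_{n,m}\}$ on $E_{\mu_K}(\lambda_0)$ with $\lambda_0=\capp(K)\,R/\rho_\mu(K)$, and then Theorem~\ref{divergencia} forces $E_{\mu_K}(\lambda_0)\setminus\Sigma\subset D_{\mu,m}$. Since the disk $\overline{B}_1$ (your $\overline{D(z^{*},\delta)}$) is constructed inside $E_{\mu_K}(\lambda_0)\cap\Omega$, it lies in $D_{\mu,m}$ and is therefore automatically disjoint from the level set $\{\exp\{-P(\mu;\cdot)\}=R_{\mu,m}\}\setminus V$; this is what guarantees that the target neighborhood $U$ sits inside $D$. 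Your final paragraph anticipates the issue but does not supply this step.
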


\begin{proof}
Several arguments that will be used below are similar to those
employed in the proof of Theorem \ref{divergencia}. In particular,
we divide the proof into three parts along the same lines.

1. Let us suppose that $R_{\mu,m}<R$, hence we will come into
contradiction. It follows from (\ref{hypo}), using Corollary
\ref{teoremaa}, that the multi-point approximants $\Pi_{n,m}$
 converge in $\sigma$-content to $f$ on compact subsets of
 $E_{\mu_K}(\lambda_0)$, where
\begin{equation}\label{defi}
\lambda_0=\capp (K)\,\frac{R}{\rho_\mu(K)}>\capp (K).
\end{equation}
In case that $\rho_\mu(K)=0$ or $R=+\infty$ we have
$\lambda_0=+\infty$ and Theorem \ref{teorema1} trivially holds true.
Besides, from Theorem \ref{divergencia}, we have
$E_{\mu_K}(\lambda_0)\subset D_{\mu,m}\cup\Sigma$.

 Let $V$ be the neighborhood of $\Sigma$ in which $f$ is
analytic and set
$$
s_n\equiv
Q_{n,m}\,Q_{n-1,m}\,(\Pi_{n,m}-\Pi_{n-1,m})\in\mathcal{P}_n,\quad
n\ge m.
$$
Using the same argument that appears in the proof of Theorem
\ref{divergencia}, we see that in order to reach a contradiction it
is enough to prove that the functions
$$
h_n(z)=\frac{1}{n}\log |s_n(z)|+P(\mu;z)+\log R_{\mu,m}
$$
are uniformly bounded by a negative constant on a neighborhood $U$
of the set
$$
\{z\in\mathbb{C}\setminus V\,:\, \exp
\{-P(\mu;z)\}=R_{\mu,m}\}\not=\emptyset.
$$

2. For technical reasons that will
 become apparent below, we fix $\lambda>\capp(K)$, $\lambda<\lambda_0$, sufficiently close to $\capp(K)$
so that
\begin{equation}\label{constante}
2\,\log\left(\frac{\lambda}{\capp K}\right)<\,\frac{1}{4}\log\left(
\frac{R}{R_{\mu,m}}\right).
\end{equation}

From (\ref{hypo}) and \eqref{desig} it follows that
\begin{equation}\label{bernstein0}
\limsup_{n\to\infty} \|s_n\|^{1/n}_{K}\le\frac{\capp(K)}{\lambda_0}.
\end{equation}
Using the Bernstein-Walsh lemma (see Theorem $5.5.7$ in
\cite{ransford}), it holds
\begin{equation}\label{bernstein}
\limsup_{n\to\infty}
\|s_n\|^{1/n}_{F_\kappa}\le\frac{\kappa}{\lambda_0}<1,
\end{equation}
for all $\kappa$ with $\capp (K)<\kappa<\lambda_0$, where
$F_\kappa=\overline{E_{\mu_K}(\kappa)}$.

Fix $\delta>0$ such that
\begin{equation}\label{bernstein2}
\delta<\log\left(\frac{\lambda}{\capp(K)}\right)
\end{equation}
 and $\epsilon>0$ with $\epsilon<\lambda_0-\lambda$ and $\log (\lambda+\epsilon)<\log \lambda+\delta/2$.

Apply \eqref{bernstein} for $\kappa=\lambda+\epsilon$. Then, we have
$$
\frac{1}{n}\log |s_n(z)|\le
\log(\lambda+\epsilon)-\log\lambda_0+\delta/2,\quad n\ge
n_1(\delta),\quad z\in E_{\mu_K}(\lambda+\epsilon).
$$
Therefore,
$$
\frac{1}{n}\log |s_n(z)|\le \log\lambda-\log\lambda_0+\delta,\quad
n\ge n_1(\delta),\quad z\in E_{\mu_K}(\lambda+\epsilon).
$$
That is,
\begin{equation}\label{bernstein3}
h_n(z)\le  \log\lambda-\log\lambda_0+P(\mu;z)+\log R_{\mu,m}+\delta,
\end{equation}
for all $z\in E_{\mu_K}(\lambda+\epsilon)$ and $n\ge n_1(\delta)$.

Recall that $\Omega=\overline{\mathbb{C}}\setminus\Sigma$ is a
connected open set. Let $z_0\in K$ such that
$P(\mu,z_0)=-\log\rho_\mu(K)$. From hypotheses, we may assume that
$z_0\in\overline{\Omega}$ which implies that $z_0$ is a limit point
of $\Omega$ in the fine topology. As $K\subset
E_{\mu_K}(\lambda+\epsilon)$, there exist an open disk $B_0$
centered at $z=z_0$ and a point $z_1\in\Omega$ such that $z_1\in
B_0\subset\overline{B}_0\subset E_{\mu_K}(\lambda+\epsilon)$ and
$$
P(\mu;z_1)< -\log \rho_\mu(K)+\frac{1}{8}\log\left(
\frac{R}{R_{\mu,m}}\right).
$$
Then, there exists an open disk $B_1$ centered at $z=z_1$ with
$\overline{B}_1\subset B_0\cap \Omega$ and verifying
\begin{equation}\label{bernstein4}
P(\mu;z)< -\log \rho_\mu(K)+\frac{1}{4}\log\left(
\frac{R}{R_{\mu,m}}\right),\quad\mbox{for all}\quad z\in
\overline{B}_1,
\end{equation}
since $P(\mu;\cdot)$ is a continuous function on a neighborhood of
$z_1$.

Therefore, using (\ref{bernstein3}), (\ref{bernstein2}),
\eqref{defi}, (\ref{bernstein4}), and (\ref{constante}), for all
$z\in \overline{B}_1$ and $n\ge n_1(\delta)$, it holds
$$
\begin{array}{rcl}
h_n(z)&\le&\dst \log\lambda-\log\lambda_0+P(\mu;z)+\log
R_{\mu,m}+\delta\\ \\
& =&\dst\log\left(\frac{\lambda}{\capp (K)}\right)+P(\mu;z)+\log
R_{\mu,m}+\log\left(\frac{\capp(K)}{\lambda_0}\right)+\delta
\\ \\
&<&\dst 2\,\log\left(\frac{\lambda}{\capp(K)}\right)+P(\mu;z)+\log
R_{\mu,m}+\log\left(\frac{\capp(K)}{\lambda_0}\right)
\\ \\
&=&\dst
2\,\log\left(\frac{\lambda}{\capp(K)}\right)+P(\mu;z)+\log
\rho_\mu(K)-\log\left(\frac{R}{R_{\mu,m}}\right) \\ \\
&<&\dst 2\,\log\left(\frac{\lambda}{\capp(K)}\right)-\frac{3}{4}
\log\left(\frac{R}{R_{\mu,m}}\right)<
-\,\frac{1}{2}\,\log\left(\frac{R}{R_{\mu,m}}\right).
\end{array}
$$
Then, we obtain
\begin{equation}\label{bernstein5}
h_n(z)<\alpha<0,\quad\mbox{for all}\,\,\, z\in \overline{B}_1 \,\,\,
\mbox{and} \,\,\, n\ge n_1(\delta).
\end{equation}

On the other hand, let $Q_m(z)=\prod_{i=1}^\nu (z-z_i)$, where
$z_i,\,i=1,\dots,\nu\le m,$ are the poles of $f$ in $D_{\mu,m}$.
 Let $W$ be a bounded open set with connected complement such that
$\overline{W}$ is regular,  $\Sigma\subset
W\subset\overline{W}\subset V,$ and  $\overline{W}\cap
\overline{B}_1=\emptyset$. Consequently, $\partial W$ does not
contain any pole of the function $f$ in $D_{\mu,m}$ and the set
$D=\overline{\mathbb{C}}\setminus (\overline{W}\cup \overline{B}_1)$
is connected.

Reasoning as in the proof of Theorem \ref{divergencia}, we have
$$
\limsup_{n\to\infty}\frac{1}{n}\log|Q_{n,m}(z)\,(f-\Pi_{n,m})(z)|\le
-P(\mu;z)-\log R_{\mu,m},
$$
uniformly on $\partial W$. Hence, on account of \eqref{desig}, it
holds
\begin{equation}\label{epsilon0}
\limsup_{n\to\infty}\,h_n(z)\le 0,
\end{equation}
uniformly on $\partial W$. Then, fixed $\varepsilon>0$, we obtain
\begin{equation}\label{epsilon}
\,h_n(z)\le \varepsilon, \quad n\ge n_2(\varepsilon), \quad z\in
\partial W.
\end{equation}

3. For each $\varepsilon\ge 0$, a harmonic function
$\psi_\varepsilon$ is constructed as the solution on
$D=\overline{\mathbb{C}}\setminus (\overline{W}\cup \overline{B}_1)$
of the Dirichlet problem given by the boundary values
$$
\psi_\varepsilon(z)=
 \begin{cases}
    \varepsilon, & \text{for}\,\, z\in \partial W, \\
    \alpha, & \text{for}\,\, z\in\partial B_1.
  \end{cases}
$$
As the functions $h_n$ are subharmonic on $D\subset
\overline{\mathbb{C}}\setminus \Sigma$ and taking account of
(\ref{epsilon}) and (\ref{bernstein5}), we see that
$\psi_\varepsilon$ is a harmonic majorant of $h_n$ for all $n\ge
N=\max\{n_1(\delta),n_2(\varepsilon)\}$. Besides, the open set $U$
of part 1 verifies $U\subset D$ since
$E_{\mu_K}(\lambda_0)\setminus\Sigma\subset D_{\mu,m}$. The rest of
the proof is analogous to that of part $3$ of Theorem
\ref{divergencia}.
\end{proof}

%We wish to mention that
Theorem \ref{teorema1} was proved in \cite{grothmann} for the
particular case $m=0$ under the additional assumption that the sets
$K$ and $\Sigma$ coincide.

\begin{rem}\label{obs4} Due to the same reasons as in Remark \ref{obs2},
Theorem \ref{teorema1} may be proved under the slightly weaker
hypothesis that formula \eqref{hypo} holds true only for a
subsequence $\Lambda=\{n_k\}\subset\mathbb{N}$ verifying
\eqref{sub}. Condition \eqref{sub} is used here to obtain
\eqref{bernstein0} and \eqref{epsilon0}.
\end{rem}

%\begin{rem}\label{obs5}In some particular cases related to
%interpolation according to the equilibrium distribution of $\Sigma$
%it is known that geometric convergence in a compact subset contained
%in the interior of $\Sigma$ determines the region of meromorphic or
%analytic continuation of $f$ (cf. pp. 153-154 in \cite{walsh}).
%\end{rem}

In the case that the table of interpolation points is newtonian, we
can be more precise.

\begin{propo}\label{teorema1b}
Let the measure $\mu$ be the asymptotic zero distribution of the
sequence of interpolation points given by $\{w_n\}_{n\in\mathbb{N}}$
which form a newtonian table. Suppose that the function $f$ is
defined at a point $z\not\in\Sigma$ and fulfills
$$
\limsup_{n\to\infty}|f(z)-\Pi_{n,m}(z)|^{1/n}\le\frac{\rho_\mu(\{z\})}{R}<1.
$$
Then, $R_{\mu,m}\ge R$, that is, $f$ admits meromorphic continuation
with at most $m$ poles on the  set $E_\mu(R)$.
\end{propo}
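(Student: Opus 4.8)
The plan is to reduce Proposition \ref{teorema1b} to Proposition \ref{teorema2} by working with the telescopic representation of the approximants, just as in the proof of Proposition \ref{teorema2}. Assume, for contradiction, that $R_{\mu,m} < R$. As before, we may suppose $R_{\mu,m} < +\infty$ (otherwise there is nothing to prove, and in fact the hypothesis would be enough to conclude everything). Since the table is newtonian, formula \eqref{analitica2} holds, so
\[
\Pi_{n+1,m}(z) - \Pi_{n,m}(z) = \frac{A_n\, w_{n+1}(z)}{Q_{n,m}(z)\,Q_{n+1,m}(z)},
\]
and, setting $1/R^* = \limsup_{n\to\infty}|A_n|^{1/n}$, the argument in the proof of Proposition \ref{teorema2} gives $R^* = R_{\mu,m}$; here I would invoke that proof rather than repeat it.

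The heart of the matter is to show that the hypothesis forces $R^* \ge R$, contradicting $R^* = R_{\mu,m} < R$. First I would note that the pointwise hypothesis at $z$ together with \eqref{desig} (applied to the singleton or a small disk about $z$, on which $Q_{n,m}$ stays bounded away from $0$ — here it is essential that $z \notin \Sigma$ so that $z$ eventually avoids the zeros of $Q_{n,m}$, using \eqref{desig}) yields
\[
\limsup_{n\to\infty}\left|\frac{A_n\, w_{n+1}(z)}{Q_{n,m}(z)\,Q_{n+1,m}(z)}\right|^{1/n} \le \frac{\rho_\mu(\{z\})}{R}.
\]
Combining this with \eqref{descenso2}, which gives $\lim_n |w_{n+1}(z)|^{1/n} = e^{-P(\mu;z)} = \rho_\mu(\{z\})$, and again with the lower bound $|Q_{n,m}(z)\,Q_{n+1,m}(z)| > C\, n^{-4m}$ from \eqref{desig}, the factor $\rho_\mu(\{z\})$ cancels and one is left with $\limsup_n |A_n|^{1/n} \le 1/R$, i.e. $R^* \ge R$. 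This is the contradiction.

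I expect the main obstacle to be the bookkeeping around $\rho_\mu(\{z\})$ possibly being small or the point $z$ being close to $\Sigma$: one must ensure the quantities $|Q_{n,m}(z)|$ and $|Q_{n+1,m}(z)|$ are controlled from below along the full sequence (not just a subsequence), which is exactly what the second inequality in \eqref{desig} provides once we know $z \notin \Sigma$, since then $z$ lies in $K(\varepsilon)$ for $K = \{z\}$ and $\varepsilon$ small enough that $z$ is never swallowed by $J_\varepsilon$ — this uses that the zeros of $Q_{n,m}$ lie in $\Sigma$. A secondary point is to confirm that $\rho_\mu(\{z\}) = e^{-P(\mu;z)} > 0$ and finite for $z \notin \Sigma$, which is immediate since $P(\mu;\cdot)$ is harmonic, hence finite, on $\mathbb{C}\setminus\Sigma$. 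With these in place the argument closes cleanly, and in fact it is considerably shorter than the proof of Theorem \ref{teorema1} because the newtonian structure replaces the potential-theoretic majorization by the elementary cancellation above. One could also phrase the conclusion as $R_{\mu,m} = R^*$ recovering the exact characterization alluded to after Theorem B.
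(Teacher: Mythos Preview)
Your approach is the paper's: isolate $A_n$ from \eqref{analitica2}, bound $\limsup_n |A_n|^{1/n}\le 1/R$ from the pointwise hypothesis, and combine with $R^*=R_{\mu,m}$ from the proof of Proposition~\ref{teorema2}. Two slips, though. To pass from the bound on $|\Pi_{n+1,m}(z)-\Pi_{n,m}(z)|$ to one on $|A_n|$ you write
\[
|A_n|=\bigl|\Pi_{n+1,m}(z)-\Pi_{n,m}(z)\bigr|\cdot\frac{|Q_{n,m}(z)\,Q_{n+1,m}(z)|}{|w_{n+1}(z)|},
\]
so what is needed is the \emph{upper} bound $\|Q_{n,m}\|_K<C_1$ from \eqref{desig}, not the lower one; the paper uses exactly this, together with \eqref{descenso2} for $|w_{n+1}(z)|$. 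Second, your claim that ``the zeros of $Q_{n,m}$ lie in $\Sigma$'' is false: the $\zeta_{n,k}$ are the free poles of $\Pi_{n,m}$ and may lie anywhere in $\mathbb{C}$. Fortunately that claim, and the whole $K(\varepsilon)$ detour, become unnecessary once you use the correct inequality, since the upper bound in \eqref{desig} holds on all of $K$. With these corrections your argument coincides with the paper's (which, incidentally, is direct rather than by contradiction).
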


\begin{proof}
From \eqref{analitica2} it follows that
\begin{equation}\label{An1}
|A_n|\le\left|\frac{Q_{n,m}(z)\,Q_{n+1,m}(z)
}{w_{n+1}(z)}\right|\,(|f(z)-\Pi_{n+1,m}(z)|+|f(z)-\Pi_{n,m}(z)|)
\end{equation}
 Hence, taking limits in
\eqref{An1} and using \eqref{desig} and \eqref{descenso2}, we obtain
\begin{equation}\label{An2}
\limsup_{n\to\infty}|A_n|^{1/n}\le\frac{1}{\rho_{\mu}(\{z\})}\,
\limsup_{n\to\infty}|f(z)-\Pi_{n,m}(z)|^{1/n}.
\end{equation}
Notice that $\rho_{\mu}(\{z\})>0$ since $z\not\in\Sigma$. From
\eqref{An2} and the proof of Proposition \ref{teorema2} it follows
that
$$
\frac{1}{R_{\mu,m}}=\limsup_{n\to\infty}|A_n|^{1/n}\le\frac{1}{R},
$$
which gives the result.
\end{proof}

The following corollary is a straightforward consequence of Theorem
\ref{teorema1} and the definition of $R_{\mu,m}$.
\begin{cor}\label{exactrate1}
Let the measure $\mu$ be the asymptotic zero distribution of the
sequence of interpolation points given by
$\{w_n\}_{n\in\mathbb{N}}$. Suppose that $f$ has exactly $m$ poles
on $D_{\mu,m}$. Let $K\subset D_{\mu,m}$ be a regular compact subset
not containing any pole of the function $f$ and for which the value
$\rho_\mu(K)$ is attained at a point that does not belong to the
interior of $\Sigma$. Then, it holds
$$
\limsup_{n\to\infty}\|f-\Pi_{n,m}\|^{1/n}_K=\frac{\rho_\mu(K)}{R_{\mu,
m}}<1.
$$
\end{cor}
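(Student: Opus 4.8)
The plan is to derive Corollary \ref{exactrate1} by combining the upper bound already furnished by Theorem \ref{thwarner} (equivalently Lemma \ref{lema3}) with a matching lower bound obtained from Theorem \ref{teorema1}. Since $f$ has exactly $m$ poles on $D_{\mu,m}$ and $K\subset D_{\mu,m}$ contains none of them, part ii) of Theorem \ref{thwarner} applies directly and gives
\[
\limsup_{n\to\infty}\|f-\Pi_{n,m}\|^{1/n}_K\le\frac{\rho_\mu(K)}{R_{\mu,m}}<1 .
\]
So the whole content of the corollary is the reverse inequality $\limsup_{n\to\infty}\|f-\Pi_{n,m}\|^{1/n}_K\ge\rho_\mu(K)/R_{\mu,m}$.

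To get this, I would argue by contradiction: suppose
\[
\limsup_{n\to\infty}\|f-\Pi_{n,m}\|^{1/n}_K\le\frac{\rho_\mu(K)}{R}
\]
for some $R>R_{\mu,m}$ (such an $R$ exists under the contrary assumption because $\limsup\|f-\Pi_{n,m}\|_K^{1/n}<\rho_\mu(K)/R_{\mu,m}$ would allow us to pick $R$ strictly between). Then the hypothesis \eqref{hypo} of Theorem \ref{teorema1} is satisfied with this $R$; note that all the structural requirements of that theorem are exactly the ones assumed in the corollary — $\mu$ is the asymptotic zero distribution of $\{w_n\}$, $K$ is a regular compact set, and $\rho_\mu(K)$ is attained at a point outside the interior of $\Sigma$. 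Theorem \ref{teorema1} then yields $R_{\mu,m}\ge R$, contradicting $R>R_{\mu,m}$. Hence $\limsup_{n\to\infty}\|f-\Pi_{n,m}\|^{1/n}_K\ge\rho_\mu(K)/R_{\mu,m}$, and together with the upper bound from Theorem \ref{thwarner} this gives the stated equality.

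There is essentially no hard step here: the proof is a bookkeeping argument that lines up the hypotheses of the corollary with those of Theorem \ref{teorema1} on one side and of Theorem \ref{thwarner} on the other. The only point that requires a moment's care is the degenerate case $\rho_\mu(K)=0$; but if $K\subset D_{\mu,m}$ and $\rho_\mu(K)=0$ then, since $R_{\mu,m}>r_0\ge 0$, the upper bound from Theorem \ref{thwarner} already forces $\limsup_{n\to\infty}\|f-\Pi_{n,m}\|_K^{1/n}=0$, so the asserted equality holds trivially; and in any case $K\subset D_{\mu,m}$ together with the definition of $D_{\mu,m}$ via $E_\mu(R_{\mu,m})$ guarantees $\rho_\mu(K)<R_{\mu,m}$, so the right-hand side $\rho_\mu(K)/R_{\mu,m}$ is indeed $<1$ as claimed in the statement.
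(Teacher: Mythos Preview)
Your argument is correct and matches the paper's intended proof: the paper simply states that the corollary is ``a straightforward consequence of Theorem \ref{teorema1} and the definition of $R_{\mu,m}$,'' which is exactly the combination you spell out---the upper bound from \eqref{montessus} in Theorem \ref{thwarner} and the lower bound by contradiction via Theorem \ref{teorema1}. (Your aside on $\rho_\mu(K)=0$ is harmless but in fact vacuous here, since a regular $K$ has positive capacity while $\{P(\mu;\cdot)=+\infty\}$ is polar.)
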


If we use Proposition \ref{teorema1b} instead, we analogously obtain
the next result.

\begin{cor}\label{exactrate2}
Let the measure $\mu$ be the asymptotic zero distribution of the
sequence of interpolation points given by $\{w_n\}_{n\in\mathbb{N}}$
which form a newtonian table. Suppose that $f$ has exactly $m$ poles
$z_i,\,i=1,\dots,m,$ on $D_{\mu,m}$. Then,
$$
\limsup_{n\to\infty}|f(z)-\Pi_{n,m}(z)|^{1/n}=
\frac{\rho_\mu(\{z\})}{R_{\mu,m}}<1,
$$
for each $z\in
D_{\mu,m}\setminus\left(\Sigma\cup\{z_1,\dots,z_m\}\right)$.
\end{cor}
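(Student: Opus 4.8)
The plan is to combine the two one‑sided estimates that have already been established for newtonian tables. Recall from the proof of Proposition~\ref{teorema2} that, with $1/R^* = \limsup_n |A_n|^{1/n}$, one has $R^* = R_{\mu,m}$; thus it suffices to bound $\limsup_n |A_n|^{1/n}$ from above and from below by $1/\rho_\mu(\{z\})$ times $\limsup_n |f(z)-\Pi_{n,m}(z)|^{1/n}$, for the fixed point $z$ in question. This will turn the exact asymptotic equality $R^* = R_{\mu,m}$ into the claimed exact rate at $z$.

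First I would derive the upper bound for $\limsup_n|f(z)-\Pi_{n,m}(z)|^{1/n}$. Since $f$ has exactly $m$ poles on $D_{\mu,m}$, Lemma~\ref{lema3} applies: for a compact neighbourhood $K$ of $z$ contained in $D_{\mu,m}$ and avoiding the poles, and for $\varepsilon>0$ small enough that $z\in K(\varepsilon)$, formula \eqref{contenido} gives $\limsup_n \|f-\Pi_{n,m}\|_{K(\varepsilon)}^{1/n}\le \rho_\mu(K)/R_{\mu,m}$. Shrinking $K$ to $\{z\}$ (using continuity of $P(\mu;\cdot)$ at $z\notin\Sigma$, so that $\rho_\mu(K)\to\rho_\mu(\{z\})$) yields $\limsup_n|f(z)-\Pi_{n,m}(z)|^{1/n}\le \rho_\mu(\{z\})/R_{\mu,m}$. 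Alternatively one may invoke Theorem~\ref{thwarner}ii) directly. Combined with \eqref{An2} this gives $\limsup_n|A_n|^{1/n}\le 1/R_{\mu,m}$, which is really just the inequality $R^*\ge R_{\mu,m}$ already contained in Proposition~\ref{teorema2}.

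For the reverse inequality I would use the divergence mechanism from Proposition~\ref{teorema2}, but quantitatively rather than qualitatively. From \eqref{analitica2}, $\Pi_{n+1,m}(z)-\Pi_{n,m}(z) = A_n w_{n+1}(z)/(Q_{n,m}(z)Q_{n+1,m}(z))$, so by telescoping and the triangle inequality $|f(z)-\Pi_{n,m}(z)|\le \sum_{k\ge n}|A_k||w_{k+1}(z)|/|Q_{k,m}(z)Q_{k+1,m}(z)|$. Using \eqref{desig} (lower bound on $|Q_{k,m}(z)|$ since $z$ can be kept out of the sets $J_\varepsilon$) and \eqref{descenso2}, each term is at most $C\,k^{4m}\,|A_k|^{1/k \cdot k}(\rho_\mu(\{z\})+o(1))^{k}$, and comparing with a geometric series whose ratio is controlled by $\limsup_k|A_k|^{1/k}\cdot\rho_\mu(\{z\})<1$ (which holds because $R^*=R_{\mu,m}$ and $\rho_\mu(\{z\})<R_{\mu,m}$ for $z\in D_{\mu,m}$) gives $\limsup_n|f(z)-\Pi_{n,m}(z)|^{1/n}\le \rho_\mu(\{z\})\limsup_k|A_k|^{1/k}=\rho_\mu(\{z\})/R_{\mu,m}$. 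For the matching lower bound I would show that the tail of the telescoping series cannot decay faster than its dominant term: along a subsequence realising $\limsup_k|A_k|^{1/k}$, the term $|A_k w_{k+1}(z)/(Q_{k,m}(z)Q_{k+1,m}(z))|$ has $k$‑th root tending to $\rho_\mu(\{z\})/R_{\mu,m}$, and since this single term differs from $\Pi_{k+1,m}(z)-\Pi_{k,m}(z)$, at least one of $|f(z)-\Pi_{k,m}(z)|$, $|f(z)-\Pi_{k+1,m}(z)|$ is at least half of it; hence $\limsup_n|f(z)-\Pi_{n,m}(z)|^{1/n}\ge \rho_\mu(\{z\})/R_{\mu,m}$. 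Together with the upper bound this finishes the proof, the exclusion of $\Sigma\cup\{z_1,\dots,z_m\}$ guaranteeing that the relevant quantities are finite and the denominators nonzero.

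The main obstacle is the lower bound: controlling the telescoping series well enough to conclude that the tail $\sum_{k\ge n}(\cdots)$ has exactly the exponential decay of its largest term rather than something smaller due to cancellation. This is handled by the standard observation that if $\limsup_k a_k^{1/k}=\ell$ then for the convergent series $\sum a_k$ one has $\limsup_n(\sum_{k\ge n}a_k)^{1/n}=\ell$ as well, applied with $a_k$ the absolute value of the general term; the only care needed is to keep $z$ outside the neighbourhoods $J_\varepsilon$ of the zeros of the $Q_{k,m}$, which is permissible since $z$ is a single fixed point and \eqref{desig} then furnishes the uniform lower bound $|Q_{k,m}(z)|>C_2 k^{-2m}$ absorbed by the $k^{4m}$ polynomial factor without affecting $k$‑th‑root asymptotics.
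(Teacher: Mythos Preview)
Your argument is correct and follows essentially the same route as the paper: the upper bound comes from Theorem~\ref{thwarner}~ii) (or Lemma~\ref{lema3}) by shrinking $K$ to $\{z\}$, and the lower bound is precisely the content of Proposition~\ref{teorema1b}, which the paper simply cites (in analogy with the way Corollary~\ref{exactrate1} is deduced from Theorem~\ref{teorema1}). Your direct treatment of the lower bound via the dominant term of the telescoping series merely unpacks that proposition: inequality~\eqref{An2} already says $1/R_{\mu,m}=\limsup_n|A_n|^{1/n}\le \rho_\mu(\{z\})^{-1}\limsup_n|f(z)-\Pi_{n,m}(z)|^{1/n}$, which is exactly the lower bound you want, and your separate telescoping upper bound is redundant.

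One small correction: your appeal to the second inequality in~\eqref{desig} to get $|Q_{k,m}(z)|>C_2 k^{-2m}$ is not justified as stated, since a fixed point $z$ need not belong to any $K(\varepsilon)$ (it could, a priori, be a zero of some $Q_{n_0,m}$, and then $z\in J_\varepsilon$ for every $\varepsilon>0$). The correct reason, available here precisely because $f$ has exactly $m$ poles, is Theorem~\ref{thwarner}~i): the zeros of $Q_{k,m}$ converge to $z_1,\dots,z_m$, hence stay away from $z$, so $|Q_{k,m}(z)|$ is eventually bounded below by a positive constant. Note, however, that this issue affects only your redundant telescoping upper bound and the two-sided claim that the $k$-th root of the general term \emph{equals} $\rho_\mu(\{z\})/R_{\mu,m}$; for the lower bound on $\limsup_n|f(z)-\Pi_{n,m}(z)|^{1/n}$ you need only the upper estimate $\|Q_{k,m}\|_K<C_1$, which holds unconditionally.
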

%%%%%%%%%%%%%%%%%%%%%%%%%%%%%%%%%%%%%%%%%%%%%%%%%%%%%%%%%%%%%%%%%%%%%%%%%%%%%%%%%%%%%%%%%%
%%%%%%%%%%%%%%%%%%%%%%%%%%%%%%%%%%%%%%%%%%%%%%%%%%%%%%%%%%%%%%%%%%%%%%%%%%%%%%%%%%%%%%%%%%%
%%%%%%%%%%%%%%%%%%%%%%%%%%%%%%%%%% BIBLIOGRAFIA %%%%%%%%%%%%%%%%%%%%%%%%%%%%%%%%%%%%%%%%%%%%%
%%%%%%%%%%%%%%%%%%%%%%%%%%%%%%%%%%%%%%%%%%%%%%%%%%%%%%%%%%%%%%%%%%%%%%%%%%%%%%%%%%%%%%%%%%%%%
%%%%%%%%%%%%%%%%%%%%%%%%%%%%%%%%%%%%%%%%%%%%%%%%%%%%%%%%%%%%%%%%%%%%%%%%%%%%%%%%%%%%%%%%%%%

\end{document}